\theoremstyle{plain}
\newtheorem{theorem}{Theorem}[section]
\newtheorem{proposition}[theorem]{Proposition}
\newtheorem{construction}[theorem]{Construction}
\newtheorem{corollary}[theorem]{Corollary}
\newtheorem{lemma}[theorem]{Lemma}
\theoremstyle{remark}
\newtheorem*{remark}{Remark}
\newcommand{\Z}{\mathbb{Z}}
\newcommand{\vora}{\textbf{\emph{a}}}
\newcommand{\vorb}{\textbf{\emph{b}}}
\newcommand{\vorc}{\textbf{\emph{c}}}
\newcommand{\vord}{\textbf{\emph{d}}}
\newcommand{\vore}{\textbf{\emph{e}}}
\newcommand{\vorf}{\textbf{\emph{f}}}
\newcommand{\vorg}{\textbf{\emph{g}}}
\newcommand{\vorh}{\textbf{\emph{h}}}
\newcommand{\leta}{\textbf{{a}}}
\newcommand{\letb}{\textbf{{b}}}
\newcommand{\letc}{\textbf{{c}}}
\newcommand{\letd}{\textbf{{d}}}
\newcommand{\lete}{\textbf{{e}}}
\newcommand{\letf}{\textbf{{f}}}
\newcommand{\letg}{\textbf{{g}}}
\newcommand{\leth}{\textbf{{h}}}
\tikzstyle{logblue}=[dotted, thick, blue!73!white, -latex, draw opacity=0.7]
\tikzstyle{loggreen}=[dashed, thick, green!60!black, -latex, draw opacity=0.7]
\tikzstyle{logred}=[dash dot, thick, red!92!white, -latex, draw opacity=0.7]
\title{Simultaneous current graph constructions for minimum triangulations and complete graph embeddings}
\author{Timothy Sun\\Columbia University}
\date{}
\begin{document}

\maketitle

\begin{abstract}
The problems of the genus of the complete graphs and minimum triangulations for each surface were both solved using the theory of current graphs, and each of them divided into twelve different cases, depending on the residue modulo 12 of the number of vertices. Cases 8 and 11 were of particular difficulty for both problems, with multiple families of current graphs developed to solve these cases. We solve these cases in a unified manner with families of current graphs applicable to both problems. Additionally, we give new constructions to both problems for Cases 6 and 9, which greatly simplify previous constructions by Ringel, Youngs, Guy, and Jungerman. All these new constructions are index 3 current graphs sharing nearly all of the structure of the simple solution for Case 5 of the Map Color Theorem. 
\end{abstract}

\section{Introduction}

In this paper, we only consider surfaces which are orientable. We let $S_g$ denote the surface of genus $g$, i.e., the sphere with $g$ handles. The \emph{Heawood number} of the orientable surface $S_g$ of genus $g$,
$$H(S_g) = \frac{7+\sqrt{1+48g}}{2}$$
gives rise to two distinct problems which share many similarities. On one hand, the Heawood number is an \emph{upper bound} on the chromatic number of the surface, and the celebrated Map Color Theorem of Ringel, Youngs, and others~\cite{Ringel-MapColor} proves that this inequality is tight for all surfaces of genus $g \geq 1$ by determining the genus of the complete graphs. In the reverse direction, $H(S_g)$ is a \emph{lower bound} on the minimum number of vertices needed to triangulate the surface with a simple graph. For $g \geq 1$, $g \neq 2$, this was also shown to be tight by Jungerman and Ringel~\cite{JungermanRingel-Minimal}.

Both of these problems break down into twelve cases, where ``Case $k$'' refers to the relevant graphs on $12{s}+k$ vertices. The main tool for constructing most of the required embeddings is the theory of current graphs~\cite{Gustin}. At times, there is overlap---for example, the complete graph $K_7$ triangulates the torus, thereby demonstrating that the chromatic number of the torus and the smallest number of vertices needed to triangulate the torus is 7. However, many of the cases are solved separately, and furthermore, the latter problem of \emph{minimum triangulations}\footnote{Jungerman and Ringel~\cite{JungermanRingel-Minimal} used the less accurate term \emph{minimal triangulations}.} often required multiple unrelated families of current graphs. 

Our goal is a partial unification of both problems using \emph{index 3} current graphs, i.e., those which are embedded with three faces. The standard solutions for Cases 3 and 5 of the Map Color Theorem, i.e., the genus of the complete graphs on $12s{+}3$ and $12s{+}5$ vertices, respectively, used simple families of index 3 current graphs whose origins can be traced back to constructions for Steiner triple systems. However, other constructions employing index 3 current graphs, perhaps most notably Case 6 of the Map Color Theorem (see \S9.3 of Ringel~\cite{Ringel-MapColor}), have not realized the same level of simplicity. For each of Cases 6, 8, 9, and 11, we present a single family of current graphs which solves both the complete graph and minimum triangulation problems except for a few small-order graphs or surfaces. Not only do these constructions improve upon past solutions in the literature, but the structure of the current graphs for the general case reuses all but a finite part of the aforementioned current graphs used for Case 5.

\section{Embeddings in surfaces and the Heawood numbers}

For background in topological graph theory, see Gross and Tucker~\cite{GrossTucker}. In a graph, possibly with self-loops or parallel edges, every edge has two \emph{ends} that are each incident with a vertex. A \emph{rotation} of a vertex is a cyclic permutation of its incident edge ends, and a \emph{rotation system} of a graph is an assignment of a rotation to every vertex of the graph. The Heffter-Edmonds principle states that cellular embeddings of a graph are in one-to-one correspondence with rotation systems: each embedding in a surface defines a rotation system by considering the cyclic order of the edge ends emanating at each vertex, while in the reverse direction, the faces of the embedding can be traced out from the rotation system in a unique manner. Our convention will be that rotations define \emph{clockwise} orderings, which induce \emph{counterclockwise} orientations for faces. In the case of simple graphs, one can express a rotation in terms of the vertex's neighbors, so a rotation system can be represented as a table of vertices, where each row is a cyclic permutation of vertices. 

The \emph{Euler polyhedral formula} states that for a cellular embedding $\phi: G \to S_g$, we have the expression
$$|V(G)|-|E(G)|+|F(G,\phi)|=2-2g,$$
where $g$ denotes the genus of the surface and $F(G,\phi)$ is the set of faces induced by the embedding. A standard consequence is the following inequality:
\begin{proposition}
For a simple graph $G$ which embeds in $S_g$, 
$$|E(G)| \leq 3|V(G)|-6+6g,$$
where equality is achieved when the embedding is \emph{triangular}, i.e. when all its faces are triangular.
\label{prop-triangle}
\end{proposition}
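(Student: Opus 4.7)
The plan is to combine Euler's formula with a face-length count. Since $G$ is simple, it has no self-loops or parallel edges, so every face of the embedding $\phi$ is bounded by a closed walk of length at least $3$. Each edge contributes exactly twice to the total face-boundary length (once on each of its two sides), giving the edge-face incidence identity
$$2|E(G)| = \sum_{f \in F(G,\phi)} \mathrm{len}(f) \geq 3\,|F(G,\phi)|.$$

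Next I would substitute into the Euler polyhedral formula $|V(G)|-|E(G)|+|F(G,\phi)|=2-2g$. Solving for $|F(G,\phi)|$ and plugging into the inequality above yields
$$2|E(G)| \geq 3\bigl(2 - 2g - |V(G)| + |E(G)|\bigr),$$
and rearranging gives the claimed bound $|E(G)| \leq 3|V(G)| - 6 + 6g$.

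For the equality statement, observe that the only inequality in the derivation is $\sum_f \mathrm{len}(f) \geq 3|F(G,\phi)|$, which is tight precisely when every face has length exactly $3$, i.e., when the embedding is triangular. Conversely, if the embedding is triangular then the chain of inequalities collapses to equalities throughout, so $|E(G)| = 3|V(G)| - 6 + 6g$.

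There is no real obstacle here: the result is a textbook consequence of Euler's formula together with the simplicity hypothesis (needed only to exclude digons and monogons as face boundaries). The only subtlety worth flagging is that the embedding must be cellular for Euler's formula to apply in the form stated; this is consistent with the convention in force throughout the paper, since the Heffter--Edmonds correspondence invoked above concerns cellular embeddings.
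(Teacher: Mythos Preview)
Your argument is correct and is exactly the standard derivation the paper has in mind: the proposition is stated in the paper without proof, merely as ``a standard consequence'' of the Euler polyhedral formula, and what you wrote is that standard consequence spelled out. There is nothing to compare.
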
 

The \emph{genus} of a graph $G$ is the minimum genus over all cellular embeddings of $G$, and is denoted $\gamma(G)$. A \emph{genus embedding} of $G$ is an embedding whose genus achieves this minimum.

\begin{corollary}
For a simple graph $G$, its genus is at least
$$\gamma(G) \geq \left\lceil\frac{|E(G)|-3|V(G)|+6}{6}\right\rceil.$$
\label{cor-lowerbound}
\end{corollary}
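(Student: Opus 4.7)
The plan is to derive this lower bound as an immediate consequence of Proposition~\ref{prop-triangle} combined with the integrality of the genus. Given any simple graph $G$, the definition of $\gamma(G)$ as the minimum genus over cellular embeddings supplies a cellular embedding $\phi : G \to S_{\gamma(G)}$, so the hypothesis of Proposition~\ref{prop-triangle} is satisfied with $g = \gamma(G)$. Applying it gives $|E(G)| \leq 3|V(G)| - 6 + 6\gamma(G)$, and solving for $\gamma(G)$ yields
$$\gamma(G) \geq \frac{|E(G)| - 3|V(G)| + 6}{6}.$$
Since $\gamma(G)$ is a non-negative integer and the right-hand side is a real number, $\gamma(G)$ must in fact be at least the \emph{ceiling} of the right-hand side, which is exactly the stated bound.

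There is no real obstacle in this argument: the corollary is essentially Proposition~\ref{prop-triangle} rewritten with $g$ as the subject, sharpened by the fact that a genus must be an integer. The only point requiring a moment of care is verifying that the proposition's hypothesis is met, but this is automatic from the existence of a minimum-genus (cellular) embedding of $G$ into $S_{\gamma(G)}$. In particular, one does not need to separately handle non-cellular embeddings, since $\gamma(G)$ is defined via cellular ones.
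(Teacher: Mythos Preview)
Your proof is correct and matches the paper's approach: the paper states this corollary without proof, treating it as an immediate rearrangement of Proposition~\ref{prop-triangle} combined with the integrality of the genus, which is exactly what you do.
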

From these relationships between the edge and vertex counts and the genus, one can derive the \emph{Heawood number}
$$H(g) = \frac{7+\sqrt{1+48g}}{2}$$
of the surface $S_g$, which serves as a rough measure of ``maximum possible density'' in the following two inequalities:

\begin{proposition}[{see Ringel~\cite[p.63]{Ringel-MapColor}}]
For $g \geq 1$, the chromatic number $\chi(S_g)$ of the surface $S_g$, i.e., the maximum chromatic number over all graphs embeddable in $S_g$, satisfies $$\chi(S_g) \leq H(S_g).$$
\label{prop-ry}
\end{proposition}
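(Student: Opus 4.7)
The plan is to follow the classical Heawood argument, combining the edge bound from Proposition~\ref{prop-triangle} with the observation that a color-critical graph has large minimum degree. It suffices to bound $\chi(G)$ for every simple graph $G$ embedded in $S_g$.

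First, I would reduce to a convenient subgraph. Given such a $G$, by the De Bruijn--Erd\H{o}s theorem I may assume $G$ is finite, and by passing to a $k$-critical subgraph (where $k=\chi(G)$) I may further assume every vertex has degree at least $k-1$; the subgraph still embeds in $S_g$ since embeddability passes to subgraphs. Summing degrees gives $(k-1)|V(G)| \leq 2|E(G)|$, while Proposition~\ref{prop-triangle} gives $|E(G)| \leq 3|V(G)|-6+6g$. Eliminating $|E(G)|$ between these two inequalities yields
$$(k-7)|V(G)| \leq 12(g-1).$$

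Next, I would split into cases based on whether $k \leq 7$ or $k \geq 8$. For $g \geq 1$ one has $H(S_g) \geq H(S_1) = 7$ and $H$ is increasing in $g$, so the bound $k \leq H(S_g)$ is automatic when $k \leq 7$. If instead $k \geq 8$, then $|V(G)| \geq k$ (any $k$-chromatic graph has at least $k$ vertices), so the displayed inequality specializes to $k(k-7) \leq 12(g-1)$, i.e., $k^2-7k-12(g-1) \leq 0$. Applying the quadratic formula then yields $k \leq (7+\sqrt{1+48g})/2 = H(S_g)$, as claimed.

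The main obstacle here is not conceptual but organizational: the argument is essentially two algebraic manipulations joined by a critical-subgraph reduction, and the only subtlety is the small-$k$ regime, where the quadratic bound alone would not suffice. The case split into $k \leq 7$ versus $k \geq 8$ cleanly handles this, leveraging the fact that $H(S_1) = 7$ already sits above the threshold.
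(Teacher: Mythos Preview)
Your proof is correct; it is the classical Heawood argument. The paper itself does not give a proof of this proposition but simply cites Ringel~\cite[p.63]{Ringel-MapColor}, where essentially the same reasoning (critical subgraph, edge bound from Proposition~\ref{prop-triangle}, and the resulting quadratic in $k$) appears; the invocation of De~Bruijn--Erd\H{o}s is harmless but unnecessary in this setting.
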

\begin{proposition}[{Jungerman and Ringel~\cite{JungermanRingel-Minimal}}]
Let $MT(S_g)$ be the minimum number of vertices over all simple graphs $G$ that have a triangular embedding in $S_g$. Then $$MT(S_g) \geq H(S_g).$$ 
\label{prop-jr}
\end{proposition}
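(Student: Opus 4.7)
The plan is to combine the two available upper bounds on $|E(G)|$ for a simple graph $G$ that triangulates $S_g$. First I would let $n = |V(G)|$ and invoke Proposition~\ref{prop-triangle}: since the embedding is triangular, Euler's formula gives the \emph{equality} $|E(G)| = 3n - 6 + 6g$. Second, since $G$ is simple, we also have the trivial bound $|E(G)| \leq \binom{n}{2} = n(n-1)/2$. Setting these two expressions against each other yields
$$3n - 6 + 6g \leq \frac{n(n-1)}{2},$$
which, after clearing denominators, rearranges to the quadratic inequality $n^2 - 7n + (12 - 12g) \geq 0$.

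Next I would solve this quadratic in $n$. The roots are $n = (7 \pm \sqrt{49 - 48 + 48g})/2 = (7 \pm \sqrt{1+48g})/2$, so the inequality forces either $n \leq (7 - \sqrt{1+48g})/2$ or $n \geq (7 + \sqrt{1+48g})/2 = H(S_g)$. For $g \geq 1$ the smaller root is nonpositive, so only the latter branch is compatible with $G$ being a nonempty graph, and we conclude $n \geq H(S_g)$. Taking the minimum over all triangulating simple graphs gives $MT(S_g) \geq H(S_g)$.

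There is no real obstacle here — the argument is a routine manipulation of the Euler formula combined with the simple graph edge bound, and is entirely parallel in spirit to Proposition~\ref{prop-ry} (where the same quadratic arises from applying the edge bound to a color-critical subgraph). The only minor point worth flagging is the sign choice when inverting the quadratic, which is justified by noting that $H(S_g) \geq 7$ for $g \geq 1$ while the other root is at most $0$.
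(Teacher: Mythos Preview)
Your argument is correct and is exactly the standard derivation: equate the Euler-formula edge count $3n-6+6g$ with the simple-graph bound $\binom{n}{2}$ and solve the resulting quadratic. The paper itself does not supply a proof of this proposition at all---it merely states it with a citation to Jungerman and Ringel---so there is nothing to compare against, but what you wrote is precisely the intended one-line justification and matches the computation alluded to in the surrounding discussion of Proposition~\ref{prop-triangle} and Corollary~\ref{cor-lowerbound}.
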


Such an embedding in Proposition~\ref{prop-jr} is known as a \emph{minimum triangulation} of $S_g$. 
We call a triangular embedding of a graph an \emph{(n,t)-triangulation} if the graph has $n$ vertices and $\binom{n}{2}-t$ edges, i.e. the graph is the complete graph on $n$ vertices with $t$ edges deleted. The tightness of the inequalities in Propositions~\ref{prop-ry} and \ref{prop-jr} is proven via alternative formulations that emphasize the number of vertices:

\begin{theorem}[Ringel and Youngs~\cite{RingelYoungs}]
The genus of the complete graph $K_n$ is $$\gamma(K_n) = \left\lceil \frac{(n-3)(n-4)}{12} \right\rceil.$$
\end{theorem}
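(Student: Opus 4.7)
The proof splits into the two standard directions. The lower bound is immediate from Corollary~\ref{cor-lowerbound}: substituting $|V(K_n)| = n$ and $|E(K_n)| = \binom{n}{2}$, we get
$$\gamma(K_n) \geq \left\lceil \frac{\binom{n}{2} - 3n + 6}{6} \right\rceil = \left\lceil \frac{n^2 - 7n + 12}{12} \right\rceil = \left\lceil \frac{(n-3)(n-4)}{12} \right\rceil$$
after routine algebra. So the content of the theorem lies entirely in the upper bound: for every $n \geq 3$, we must exhibit a cellular embedding of $K_n$ achieving this genus.

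The strategy is to construct, whenever possible, a \emph{triangular} embedding of $K_n$, which by Proposition~\ref{prop-triangle} saturates the lower bound exactly when $(n-3)(n-4) \equiv 0 \pmod{12}$, i.e. for $n \equiv 0, 3, 4, 7 \pmod{12}$. For the remaining eight residue classes modulo 12, $(n-3)(n-4) \bmod 12$ takes a value in $\{2,6,8\}$, and we must produce an embedding with a prescribed small number of non-triangular faces whose combined excess matches the ceiling correction. Thus the plan is to split into twelve cases according to $n \bmod 12$ and, for each, design a rotation system (equivalently, by the Heffter--Edmonds correspondence, a cellular embedding of $K_n$) whose face list consists of $\binom{n}{2} - t$ triangles together with a bounded number of larger faces accounting for $t$ edge-incidences, for the appropriate small $t$ dictated by the Euler formula.

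The workhorse for producing these rotation systems is the theory of current graphs, which compactly encode rotation systems possessing a $\Z_n$-symmetry by embedding a small multigraph whose edges carry ``currents'' in an auxiliary group. Cases 3 and 5 admit clean index-$3$ solutions tracing back to Steiner triple systems, Cases 1, 2, 7, and 10 have standard index-$1$ families, and Cases 0 and 4 reduce to Cases 3 and 7 by adding a single vertex. The genuine difficulty is concentrated in Cases 6, 8, 9, and 11, where historically several unrelated families of current graphs have been needed and which are the subject of the present paper. For each hard case the main obstacle is to construct an infinite family of current graphs satisfying (i) Kirchhoff's Current Law at every vertex, (ii) the correct face count for the embedding of the current graph, and (iii) any additional ``log'' conditions ensuring the derived covering graph is $K_n$ with the prescribed face distribution. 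Once such a family is exhibited, verification is mechanical; finding it and handling the finitely many exceptional small $n$ by ad hoc embeddings is where the real work lies.
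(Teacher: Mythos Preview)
The paper does not give its own proof of this theorem; it is stated as a cited result of Ringel and Youngs, and the paper's actual contribution is to supply new, unified constructions for Cases~6, 8, 9, and 11 of the upper bound (together with the associated minimum triangulations). So there is no proof in the paper to compare your attempt against.

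That said, as a sketch of the classical argument your outline is broadly right---lower bound from the Euler formula via Corollary~\ref{cor-lowerbound}, upper bound via case analysis on $n \bmod 12$ using current graphs---but several of your case-by-case claims are inaccurate. Your assertion that ``Cases 0 and 4 reduce to Cases 3 and 7 by adding a single vertex'' is wrong: adding a vertex in a face of a triangular embedding of $K_{12s+3}$ does not produce an embedding of $K_{12s+4}$, and in fact each of Cases~0, 3, 4, 7 has its own current-graph family yielding a genuine triangular embedding of $K_n$. Your grouping of Case~2 with the ``standard index-1'' cases is also misleading: Case~2 is one of the harder residues, requiring a multi-handle additional-adjacency step (the very one the paper recycles in Lemma~\ref{lem-k8} for Case~11), and the paper's conclusion singles out Case~2 as still the least unified. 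Finally, what you wrote is an outline, not a proof: you have not actually exhibited any of the current graphs or verified the construction principles, so even as a standalone argument this would be a summary of the literature rather than a proof.
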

\begin{theorem}[Jungerman and Ringel~\cite{JungermanRingel-Minimal}]
For all pairs of integers $(n,t) \neq (9,3)$, where 
\begin{align*}
&n \geq 4, 0 \leq t \leq n-6,\\
&(n-3)(n-4) \equiv 2t \pmod{12}, 
\end{align*}
there exists an $(n,t)$-triangulation.
\label{thm-mt}
\end{theorem}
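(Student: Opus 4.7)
The plan is to split into twelve cases by the residue $k = n \bmod 12$ and construct, for each admissible pair $(n,t)$ with $n = 12s+k$, an explicit triangular embedding of $K_n$ with $t$ edges removed. The congruence $(n-3)(n-4) \equiv 2t \pmod{12}$ is forced by Proposition~\ref{prop-triangle}: equality there for a simple graph on $n$ vertices with $\binom{n}{2}-t$ edges embedded in $S_g$ gives $6g = (n-3)(n-4) - 2t$, so this quantity must be a nonnegative multiple of $12$. Tabulating $(n-3)(n-4) \bmod 12$ shows that the smallest admissible value of $t$ for each residue, call it $t_0(k)$, lies in $\{0,1,3,4\}$, so each case reduces to producing one base construction together with a method for increasing $t$.

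When $t_0(k) = 0$, which occurs for $k \in \{0,3,4,7\}$, the base object is $K_n$ itself and we appeal to the Ringel--Youngs theorem quoted above. For the remaining residues, where $t_0(k) \in \{1,3,4\}$, I would construct the required triangular embeddings via current graphs. Specifically, one designs a small embedded graph carrying currents from a cyclic (or near-cyclic) group of order approximately $n$ so that (i) Kirchhoff's current law holds at every interior vertex, and (ii) each face boundary, when traced and summed, enumerates the nonzero group elements exactly once. Under these conditions the derived embedding has $n$ vertices, all faces triangular, and the edges absent from $K_n$ arise in a controlled pattern (a matching, triangle, or near-matching of size $t_0$) coming from the ``vortex'' contributions. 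The combinatorial heart of the argument is verifying these properties via the face-tracing procedure on each current graph.

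To extend a base $(n,t_0)$-triangulation to all admissible larger $t \leq n-6$ in the same residue class, I would apply a handle-addition step: locate a suitable six-edge local configuration involving missing edges, excise a disk, and reattach it with a handle that absorbs six additional edges into new triangular faces while raising the genus by one. Iterating this step reaches every admissible $t$. Arranging the base construction to contain the required configurations is again part of the design of the current graph.

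The principal obstacles are Cases 8 and 11, where $t_0 = 4$ and the current graph must simultaneously suppress four edges in a geometrically consistent way; historically these cases required several disjoint families of current graphs, and unifying them is precisely the subject of the present paper. Separately, for small $s$ the current graph families degenerate and the corresponding finitely many small-order pairs must be handled by ad hoc constructions. Finally, the genuine exception $(n,t) = (9,3)$ must be excluded by a short finite verification that no rotation system of $K_9$ minus three edges induces a triangular embedding of the relevant surface.
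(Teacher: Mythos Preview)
Your outline contains a directional error that would not survive contact with the Euler formula. You propose to start from an $(n,t_0)$-triangulation and reach larger $t$ by a ``handle-addition step\dots\ raising the genus by one.'' But by Proposition~\ref{prop-triangle}, a triangular embedding on $n$ vertices with $\binom{n}{2}-t$ edges lives in genus $g = \big((n-3)(n-4)-2t\big)/12$, so increasing $t$ forces the genus \emph{down}. The operation you need is handle \emph{subtraction}: one must delete a specific configuration of six edges so that the two resulting nontriangular faces collapse to triangles on a surface of genus $g-1$. That such a configuration exists is not automatic; it has to be engineered into the current graph (this is the role of the arithmetic 3-ladder in the paper and in Jungerman--Ringel).

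There is also a structural mismatch with the actual proof. Neither this paper nor Jungerman--Ringel begins at $t_0$. Current graphs with $\ell$ vortices produce triangular embeddings of $K_n-K_\ell$, i.e.\ $(n,\binom{\ell}{2})$-triangulations, so the natural starting point is some intermediate $t = \binom{\ell}{2}$ (here $\ell = 3,5,6,8$ for Cases 6, 8, 9, 11). From there one works in \emph{both} directions: handle subtraction via the arithmetic 3-ladder to reach all larger $t$, and the additional-adjacency lemmas (Lemmas~\ref{lem-k5}--\ref{lem-k8}, which add handles and edges) to reach the smaller $t$ down to $t_0$. Your description of the current-graph output as ``a matching, triangle, or near-matching of size $t_0$'' is accordingly off: the missing edges always form a clique $K_\ell$ among the vortex vertices, typically with $\binom{\ell}{2}$ well above $t_0$. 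Finally, the exclusion of $(9,3)$ is Huneke's theorem and is not a short verification; it is a genuine nonexistence argument.
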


In both problems, the proof breaks down into several cases, depending on the residue of the number of vertices $n \bmod{12}$. We call the subcase concerning graphs with $n = 12s{+}k$ vertices \emph{Case $k$}, for $k = 0, 1, \dotsc, 11$, and we often reference the value $s$ in our exposition. For example, if we speak of ``Case 6, $s = 2$'' of the Map Color Theorem, we are referring to the complete graph $K_{30}$. To differentiate between the two problems, we refer to ``Case $k$-CG'' and ``Case $k$-MT'' to denote Case $k$ of the Map Color Theorem (``complete graph'') and minimum triangulations problem, respectively. 

The fact that there are 12 Cases depending on the number of vertices for both the Map Color Theorem and the minimum triangulations problem suggests that there might be a connection between the solutions of the two problems. Indeed, in several Cases, the current graphs used in the proof of the Map Color Theorem~\cite{Ringel-MapColor} for $K_n$ have the dual purpose of also providing all the necessary minimum triangulations on the same number of vertices $n$. However, not all Cases have been combined in this manner. 

In general, our constructions will proceed in the following way: using an index 3 current graph, we generate an $(n,t)$-triangulation. We wish to find other embeddings of graphs on the same number of vertices using the following operations:

\begin{itemize}
\item \emph{Handle subtraction}, which deletes edges from a triangular embedding to produce a triangular embedding on a lower-genus surface. 
\item \emph{Additional adjacency}, which adds edges using extra handles and edge flips.
\end{itemize} 

By subtracting handles, we obtain all the necessary $(n,t')$-triangulations, for $t' > t$, and over the course of the additional adjacency step for constructing a genus embedding of $K_n$, we construct the remaining $(n,t'')$-triangulations, for $t'' < t$. We note that the genus of the complete graphs achieves the lower bound in Corollary~\ref{cor-lowerbound}.

\section{Outline for additional adjacencies}

The main goal for our additional adjacency steps is to utilize as little information about the embeddings as possible. For this reason, we present the additional adjacency solutions first, before describing any current graphs. Like in previous work, our additional adjacency solutions make use of three different operations for adding a handle, which are described in Constructions~\ref{prop-3handle}, \ref{prop-4handle}, and \ref{cons-mergehandle} in primal form. In prose, we describe the modifications to the embeddings in terms of rotation systems, so their correctness can be checked by tracing the faces and applying the Heffter-Edmonds principle. Our drawings, on the other hand, describe an alternate topological interpretation using surgery on the embedded surfaces. While these operations work more generally, we assume that all graphs in this section are simple.

\begin{construction}
Modifying the rotation at vertex $v$ from
$$\begin{array}{rrrrrrrrrrrrrrrrrrrrrrrrrrrrrrrr}
v. & x_1 & \dots & x_i & y_1 & \dots & y_j & z_1 & \dots & z_k
\end{array}$$
to
$$\begin{array}{rrrrrrrrrrrrrrrrrrrrrrrrrrrrrrrr}
v. & x_1 & \dots & x_i & z_1 & \dots & z_k & y_1 & \dots & y_j,
\end{array}$$
as in Figure~\ref{fig-3handle} increases the genus by 1 and induces the 9-sided face
$$[x_1, z_k, v, y_1, x_i, v, z_1, y_j, v]$$
\label{prop-3handle}
\end{construction}

\begin{construction}
Modifying the rotation at vertex $v$ from
$$\begin{array}{rrrrrrrrrrrrrrrrrrrrrrrrrrrrrrrr}
v. & x_1 & \dots & x_i & y_1 & \dots & y_j & z_1 & \dots & z_k & w_1 & \dots & w_l
\end{array}$$
to
$$\begin{array}{rrrrrrrrrrrrrrrrrrrrrrrrrrrrrrrr}
v. & x_1 & \dots & x_i & w_1 & \dots & w_l & z_1 & \dots & z_k & y_1 & \dots & y_j
\end{array}$$
as in Figure~\ref{fig-4handle} increases the genus by 1 and induces the two 6-sided faces
$$[x_1, w_l, v, z_1, y_j, v]\textrm{\hspace{0.25cm}and\hspace{0.25cm}}[w_1, z_k, v, y_1, x_i, v].$$
\label{prop-4handle}
\end{construction}

\begin{figure}[!ht]
    \begin{subfigure}[b]{0.7\textwidth}
        \centering
        \includegraphics[scale=0.9]{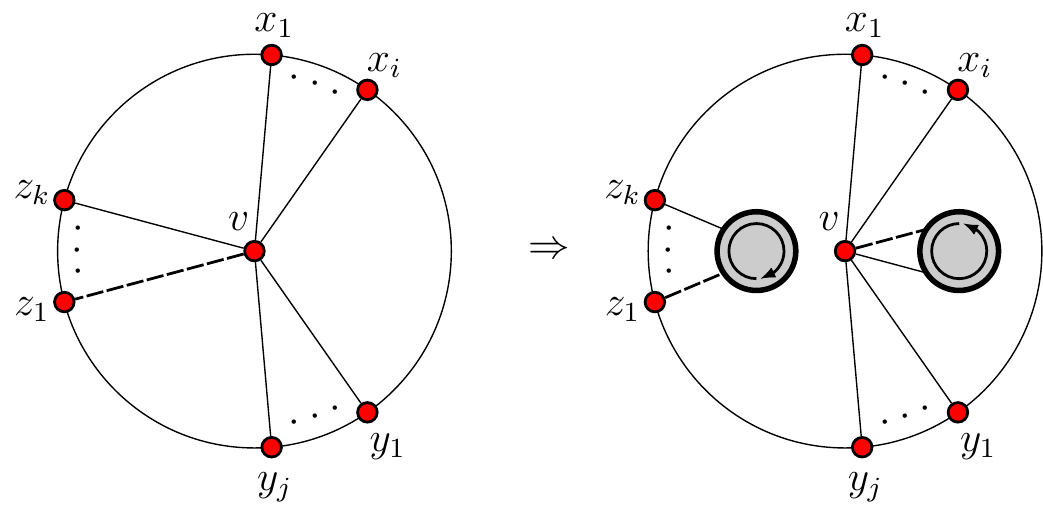}
        \caption{}
        \label{subfig-heff3-a}
    \end{subfigure}
    \begin{subfigure}[b]{0.28\textwidth}
        \centering
        \includegraphics[scale=0.9]{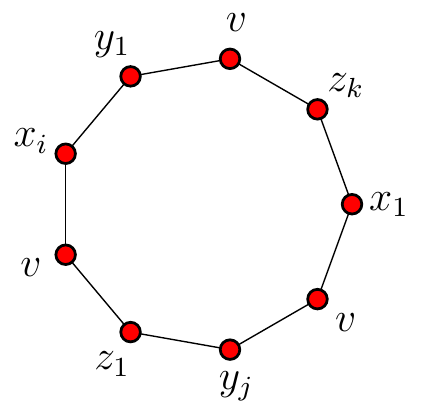}
        \caption{}
        \label{subfig-heff3-b}
    \end{subfigure}
\caption{Rearranging the rotation at vertex $v$ (a) increases the genus and creates room (b) to add new edges.}
\label{fig-3handle}
\end{figure}

\begin{figure}[!ht]
    \begin{subfigure}[b]{\textwidth}
        \centering
        \includegraphics[scale=0.9]{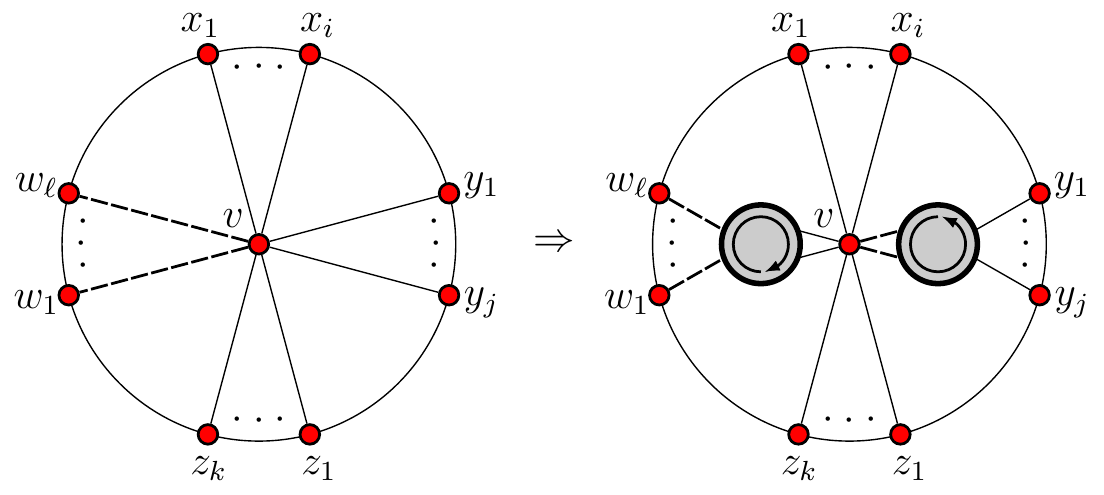}
        \caption{}
        \label{subfig-heff4-a}
    \end{subfigure}
    \begin{subfigure}[b]{\textwidth}
        \centering
        \includegraphics[scale=0.9]{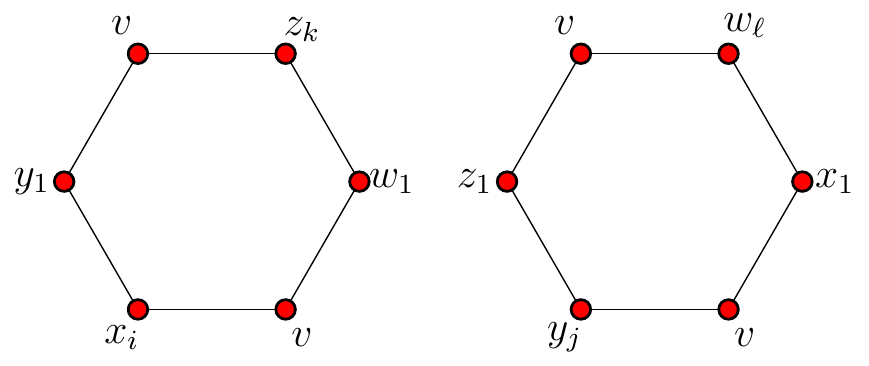}
        \caption{}
        \label{subfig-heff4-b}
    \end{subfigure}
\caption{Rearranging four groups of neighbors (a) yields two hexagonal faces (b).}
\label{fig-4handle}
\end{figure}

\begin{remark}
While the drawings in Figures \ref{fig-3handle} and \ref{fig-4handle} are drawn asymmetrically, the operations are in fact invariant under cyclic shifts of the subsets $x_1, \dotsc, x_i$; $y_1, \dotsc, y_j$, etc. 
\end{remark}

Several Cases of the Map Color Theorem are solved by first finding triangular embeddings of $K_n-K_3$. The first consequence of Construction~\ref{prop-3handle} is to transform such an embedding into a genus embedding of a complete graph.

\begin{proposition}[Ringel~\cite{Ringel-1961}]
If there exists a triangular embedding $\phi: K_n{-}K_3 \to S_g$, then there exist a genus embedding of $K_n$ in the surface $S_{g+1}$.
\label{cor-k3}
\end{proposition}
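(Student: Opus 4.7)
The plan is to build the desired embedding directly by applying Construction~\ref{prop-3handle} once to $\phi$ and then drawing the three missing edges as chords through the single large face it produces. Call the deleted triangle $abc$ and choose any vertex $v \notin \{a,b,c\}$ (possible because $n \ge 4$). Since $K_n - K_3$ contains all edges between $v$ and $\{a,b,c\}$, the vertices $a$, $b$, and $c$ each appear exactly once in the rotation of $v$ under $\phi$. Cyclically shift and relabel this rotation so that it reads
$$v.\ a,\ x_2,\ \ldots,\ x_i,\ b,\ y_2,\ \ldots,\ y_j,\ c,\ z_2,\ \ldots,\ z_k,$$
i.e., $x_1 = a$, $y_1 = b$, $z_1 = c$, with $i, j, k \ge 1$.

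Now apply Construction~\ref{prop-3handle} at $v$ with these three consecutive blocks. The genus rises from $g$ to $g+1$, and the three triangles of $\phi$ meeting $v$ at the corners $(x_i, y_1)$, $(y_j, z_1)$, $(z_k, x_1)$ merge into the single nonagonal face
$$[a,\ z_k,\ v,\ b,\ x_i,\ v,\ c,\ y_j,\ v].$$
The chosen vertices $a$, $b$, $c$ occupy positions $1$, $4$, $7$ of this boundary, so no two of them are adjacent along it. The missing edges $ab$, $bc$, $ca$ can therefore be drawn as three pairwise non-crossing chords, subdividing the nonagon into the triangle $[a, b, c]$ together with the three quadrilaterals $[a, z_k, v, b]$, $[b, x_i, v, c]$, $[c, y_j, v, a]$. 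Inserting each new edge into the rotations of its endpoints in the manner dictated by its chord placement gives a valid rotation system for $K_n$, defining a cellular embedding of $K_n$ in $S_{g+1}$.

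It remains to confirm that this embedding is of minimum genus. Applying Euler's formula to the triangular embedding $\phi$ of $K_n - K_3$ gives $g = (n-1)(n-6)/12$; as $g$ is an integer, $(n-3)(n-4)/12 = g + \tfrac12$, and hence $\lceil (n-3)(n-4)/12 \rceil = g+1$. By Corollary~\ref{cor-lowerbound} this ceiling is a lower bound for $\gamma(K_n)$, which is matched exactly by our construction, so $\gamma(K_n) = g+1$.

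The only point that really needs attention is the chord-insertion step: one has to verify that the nonagon truly accommodates all three new edges simultaneously without crossings. This is why the assignment $x_1 = a$, $y_1 = b$, $z_1 = c$ is essential, as it distributes the three targeted vertices to the mutually non-adjacent positions $1$, $4$, $7$ of the 9-gon, and why we insist on $v \notin \{a,b,c\}$ so that each of $a, b, c$ actually lies in the rotation of $v$.
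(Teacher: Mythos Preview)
Your proof is correct and follows essentially the same approach as the paper: pick a vertex $v\notin\{a,b,c\}$, apply Construction~\ref{prop-3handle} with $x_1=a$, $y_1=b$, $z_1=c$, and insert the three missing edges as chords of the resulting 9-gon. The paper simply refers to Figure~\ref{fig-add-k14}(a) for the chord placement and implicitly invokes Proposition~\ref{prop-tight} for minimality, whereas you spell out both the non-crossing chord argument and the direct Euler-formula computation showing $g+1=\lceil (n-3)(n-4)/12\rceil$.
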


Before showing how this follows from the above constructions, we first argue that all the embeddings of complete graphs we construct are in fact of minimum genus.

\begin{proposition}
Suppose we have a triangular embedding of a graph $K_n-H_e$, where $H_e$ is a graph on $e$ edges, $e < 6$. If we add the missing $e$ edges by using one handle, the resulting embedding is a genus embedding of $K_n$.
\label{prop-tight}
\end{proposition}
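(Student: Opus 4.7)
The plan is to compute the genus of the starting embedding via Euler's formula, track it through the handle addition, and match the result against the lower bound from Corollary~\ref{cor-lowerbound}.

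First, I would apply the Euler polyhedral formula to the triangular embedding $K_n - H_e \hookrightarrow S_g$. Using $|V| = n$, $|E| = \binom{n}{2} - e$, and $3|F| = 2|E|$ (from triangularity), I would solve for $g$ in terms of $n$ and $e$, obtaining $g = (n-3)(n-4)/12 - e/6$. As a sanity check, integrality of $g$ recovers the congruence $(n-3)(n-4) \equiv 2e \pmod{12}$, consistent with Theorem~\ref{thm-mt}.

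Next, since we add the missing $e$ edges using a single handle, the final embedding realizes $K_n$ in $S_{g+1}$, so its genus is
$$g + 1 = \frac{(n-3)(n-4)}{12} + \frac{6-e}{6}.$$
Taking $1 \leq e < 6$ (the case $e = 0$ being vacuous, as there would be no edges to add), the correction $(6-e)/6$ lies in the open interval $(0,1)$. Because $g+1$ is an integer, it must be the unique integer strictly between $(n-3)(n-4)/12$ and $(n-3)(n-4)/12 + 1$, and hence equals $\lceil (n-3)(n-4)/12 \rceil$.

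Finally, I would invoke Corollary~\ref{cor-lowerbound} to conclude $\gamma(K_n) \geq \lceil (n-3)(n-4)/12 \rceil = g+1$; combined with the upper bound $\gamma(K_n) \leq g+1$ supplied by the constructed embedding, this forces equality, so the embedding is of minimum genus. There is no real obstacle here—the argument is a direct arithmetic comparison with the Heawood-type lower bound, and the hypothesis $e < 6$ is precisely the slack that keeps one added handle from overshooting the ceiling by a full unit.
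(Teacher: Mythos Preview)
Your argument is correct and is precisely the verification the paper's one-line proof invites: compute the genus of the triangular embedding from Euler's formula, add one for the handle, and compare against the lower bound of Corollary~\ref{cor-lowerbound}. Your write-up simply makes explicit the arithmetic that the paper leaves to the reader.
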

\begin{proof}
One can verify that the difference between the genus of $K_n-H_e$, as given by Proposition~\ref{prop-triangle}, and the genus of $K_n$ is exactly 1.
\end{proof}

\begin{proof}[Proof of Proposition~\ref{cor-k3}]
If the three nonadjacent vertices are $\vora,\vorb,\vorc$, pick any other vertex $v$ and apply Construction~\ref{prop-3handle} with $x_1 = \vora, y_1 = \vorb, z_1 = \vorc$. In the resulting nontriangular face, the nonadjacent vertices can be connected like in Figure~\ref{fig-add-k14}(a). 
\end{proof}

For Cases 8 and 11, we will construct triangular embeddings of the graph $K_n-K_{1,4}$. These missing edges can be added in using one handle if the embedding satisfies an additional constraint:

\begin{proposition}
Let $K_n-K_{1,4}$ be a complete graph with the edges $(u, q_1), \dotsc, (u, q_4)$ deleted. If there exists a triangular embedding $\phi: (K_n-K_{1,4}) \to S_g$ with a vertex $v$ with rotation
$$\begin{array}{rrrrrrrrrrrrrrrrrrrrrrrrrrrrrrrr}
v. & \dots & q_1 & q_2 & \dots & q_3 & q_4 & \dots,
\end{array}$$
then there exists a genus embedding of $K_n$ in the surface $S_{g+1}$.
\label{prop-add-k14}
\end{proposition}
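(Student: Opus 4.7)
The plan is to imitate the proof of Proposition~\ref{cor-k3}: apply Construction~\ref{prop-3handle} once at the vertex $v$, but with the partition of $v$'s rotation chosen so that the resulting 9-sided face has $u$ together with all four $q_i$ on its boundary. Once that is arranged, the four missing edges can be drawn simultaneously as non-crossing chords emanating from $u$ inside that single face, and the final embedding of $K_n$ in $S_{g+1}$ is of minimum genus by Proposition~\ref{prop-tight}.

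Because the starting embedding is triangular, every consecutive pair in any rotation must span an edge of $K_n-K_{1,4}$, so $u$ cannot be a rotational neighbor of any $q_i$ at $v$. Writing the rotation of $v$ cyclically as $q_1, q_2, \beta, q_3, q_4, \gamma$, the vertex $u$ therefore lies strictly inside $\beta$ or $\gamma$; up to the relabeling $(q_1,q_2,q_3,q_4)\mapsto(q_3,q_4,q_1,q_2)$, I may assume $u\in\gamma$. Split $\gamma=\gamma', u, \gamma''$ with both $\gamma'$ and $\gamma''$ nonempty (again by triangularity), and apply Construction~\ref{prop-3handle} with the three groups $x=(u,\gamma'',q_1)$, $y=(q_2,\beta,q_3)$, and $z=(q_4,\gamma')$. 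The induced 9-face then reads
$$[u,\; z_k,\; v,\; q_2,\; q_1,\; v,\; q_4,\; q_3,\; v],$$
where $z_k$ is the last entry of $\gamma'$. Inside this face, draw the four chords $(u,q_1), (u,q_2), (u,q_3), (u,q_4)$; since they share only the endpoint $u$ they are pairwise non-crossing, and they subdivide the 9-face into the two quadrilaterals $[u,z_k,v,q_2]$ and $[u,q_1,v,q_4]$ together with the three triangles $[u,q_2,q_1]$, $[u,q_4,q_3]$, and $[u,q_3,v]$.

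The step that I expect to require the most care is verifying that every boundary edge of the 9-face is actually present in $K_n-K_{1,4}$: the only edge in doubt is $(u,z_k)$, but the nonemptiness of $\gamma'$ forced by the triangularity observation above keeps $z_k$ out of $\{q_1,q_2,q_3,q_4\}$, so $(u,z_k)$ is indeed an edge, while every remaining boundary edge is either incident with $v$ or equals $(q_1,q_2)$ or $(q_3,q_4)$, all of which survive in $K_n-K_{1,4}$. Since exactly four missing edges have been added using a single handle, Proposition~\ref{prop-tight} identifies the resulting embedding of $K_n$ as a genus embedding in $S_{g+1}$.
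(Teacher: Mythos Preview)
Your proof is correct and follows essentially the same approach as the paper's: apply Construction~\ref{prop-3handle} at $v$ with the partition chosen so that $u$ and all four $q_i$ lie on the boundary of the resulting 9-face, then add the four missing edges as chords from $u$. The only cosmetic difference is that the paper places $u$ as the last element of the $z$-block (so the 9-face reads $[x_1,u,v,q_2,q_1,v,q_4,q_3,v]$ with $x_1$ the successor of $u$), whereas you place $u$ as the first element of the $x$-block (yielding $[u,z_k,v,q_2,q_1,v,q_4,q_3,v]$ with $z_k$ the predecessor of $u$); both choices work for the same reason. Your explicit triangularity argument showing $u$ cannot sit next to any $q_i$ in the rotation of $v$ is a nice addition that the paper leaves implicit, and your boundary-edge check, while technically redundant (Construction~\ref{prop-3handle} does not alter the edge set, so every boundary edge of the 9-face was already present), does no harm.
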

\begin{proof}
Note that vertices $u$ and $v$ are adjacent, so assume without loss of generality that $u$ appears in the rotation of $v$ in between $q_4$ and $q_1$. Apply Construction~\ref{prop-3handle} with $$x_i = q_1, y_1 = q_2, y_j = q_3, z_1 = q_4, z_k = u$$ and connect the missing edges in the 9-sided face, as in Figure~\ref{fig-add-k14}(b). 

\begin{figure}[!ht]
    \centering
    \begin{subfigure}[b]{0.4\textwidth}
        \centering
        \includegraphics[scale=1.0]{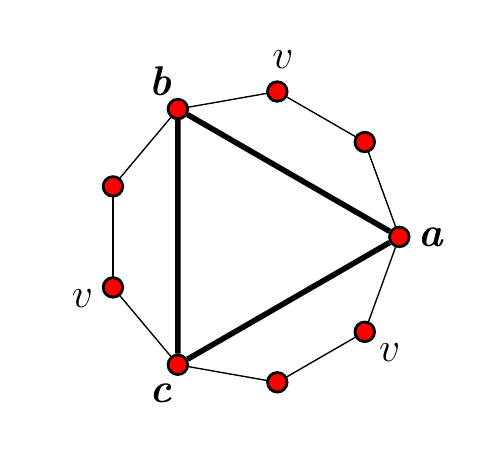}
        \caption{}
        \label{subfig-k14-a}
    \end{subfigure}
    \begin{subfigure}[b]{0.4\textwidth}
        \centering
        \includegraphics[scale=1.0]{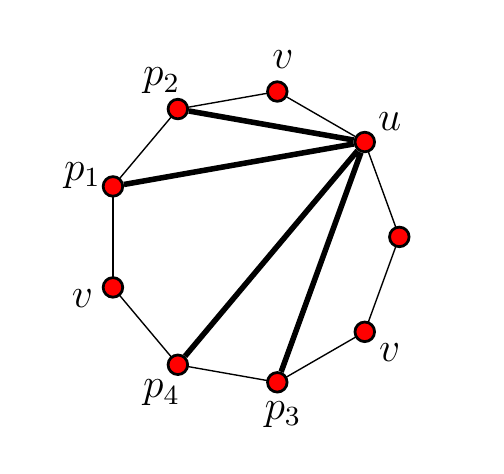}
        \caption{}
        \label{subfig-k14-b}
    \end{subfigure}
\caption{Two possibilities for adding edges after invoking Construction~\ref{prop-3handle}: a $K_3$ subgraph (a), and a $K_{1,4}$ subgraph (b).}
\label{fig-add-k14}
\end{figure}
\end{proof}

This constraint is relatively easy to satisfy, since there are a few possible permutations for $q_1, \dotsc, q_4$, in addition to the fact that $v$ is an arbitrary vertex. In fact, when we only need to add back three edges, this is always possible:

\begin{corollary}[Ringel \emph{et al.}~\cite{RingelYoungs-Case2, GuyRingel}]
If there exists a triangular embedding $\phi: K_n{-}K_{1,3} \to S_g$, then there exist a genus embedding of $K_n$ in the surface $S_{g+1}$.
\label{cor-k13}
\end{corollary}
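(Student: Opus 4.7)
The plan is to mirror the proofs of Propositions~\ref{cor-k3} and~\ref{prop-add-k14}: I would apply Construction~\ref{prop-3handle} at a carefully chosen vertex $v$ to create a 9-sided face whose boundary walk exposes all four special vertices $u, q_1, q_2, q_3$, and then insert the three missing edges as a $K_{1,3}$ fan emanating from $u$ inside that face. Since all three new edges share the common endpoint $u$, they will automatically be non-crossing chords of the 9-gon, so the result is an embedding of $K_n$ in $S_{g+1}$; Proposition~\ref{prop-tight} will then confirm that this embedding is of minimum genus, as only $e = 3 < 6$ edges were added across one handle.

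The main preliminary step is to locate a suitable $v$. I plan to establish that there is always a $v \notin \{u, q_1, q_2, q_3\}$ such that $v, q_1, q_2$ bounds a triangular face of $\phi$, via the following count: the edge $q_1 q_2$ is present in $K_n-K_{1,3}$ and, since $\phi$ is triangular, it lies on exactly two triangular faces whose third vertices are distinct. Neither third vertex can equal $u$ (which is non-adjacent to $q_1$), and those two third vertices are distinct, so at most one of them equals $q_3$; the other furnishes the desired $v$. For this $v$, the vertices $q_1$ and $q_2$ appear consecutively in $v$'s rotation (in one order or the other, which I would normalize by relabeling $q_1 \leftrightarrow q_2$ if needed), and $u, q_3$ also appear in the rotation of $v$ since $v$ is adjacent to every other vertex.

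With $v$ fixed and $q_1, q_2$ consecutive in its rotation, I would observe that $u$ is non-adjacent to each $q_i$ and so cannot appear consecutively with any $q_i$ in the rotation of a triangular embedding. Consequently, after a final relabeling of $q_3$, the cyclic order of $u, q_1, q_2, q_3$ in $v$'s rotation is one of $q_1, q_2, q_3, u$ or $q_1, q_2, u, q_3$. In either arrangement, placing the three cuts of Construction~\ref{prop-3handle} immediately after $q_1$, after $q_3$, and after $u$ arranges the partition so that $q_1, q_2, q_3, u$ occupy four of the six boundary slots $x_1, x_i, y_1, y_j, z_1, z_k$, with $u$ occurring exactly once as a corner of the resulting 9-gon $[x_1, z_k, v, y_1, x_i, v, z_1, y_j, v]$ and $q_1, q_2, q_3$ at three other distinct corners. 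Adding the three chords joining $u$ to $q_1, q_2, q_3$ inside this 9-gon would then complete the construction. The main obstacle is the initial existence argument for $v$; once that is settled, the remainder is routine bookkeeping on the 9-gon corners directly analogous to the proof of Proposition~\ref{prop-add-k14}.
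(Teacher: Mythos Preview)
Your proposal is correct and follows essentially the same route as the paper. The paper's one-line proof also selects $v$ as the third vertex of a triangular face incident with the edge $(q_1,q_2)$ and then implicitly appeals to the mechanism of Proposition~\ref{prop-add-k14}; you have simply spelled out the details the paper leaves to the reader, including the exclusion of $q_3$ as a candidate for $v$, the observation that $u$ cannot sit next to any $q_i$ in the rotation of $v$, and the explicit placement of the three cuts for Construction~\ref{prop-3handle}.
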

\begin{proof}
One can always find such a vertex $v$ by choosing a vertex on one of the triangles incident with, say, the edge $(q_1, q_2)$.
\end{proof}

A third type of handle operation is to merge two faces with a handle without modifying the rotations at any vertices. To do this, we excise a disk from two faces and identify the resulting boundaries. In Figure~\ref{fig-polyhandle}, adding the handle between faces $F_1$ and $F_2$ causes the embedding to become noncellular, as the resulting region is an annulus. However, once we start adding edges between the two boundary components of the annulus, the embedding becomes cellular again. 

\begin{construction}
Let $F_1 = [u_1, u_2, \dotsc, u_i]$ and $F_2 = [v_1, v_2, \dotsc, v_j]$ be two faces. Inserting the edge $(u_1, v_1)$ in the following way
$$\begin{array}{rrrrrrrrrrrrrrrrrrrrrrrrrrrrrrrr}
u_1. & \dots & u_i & u_2 & \dots \\
v_1. & \dots & v_j & v_2 & \dots \\
\end{array} 
\Rightarrow
\begin{array}{rrrrrrrrrrrrrrrrrrrrrrrrrrrrrrrr}
u_1. & \dots & u_i & v_1 & u_2 & \dots \\
v_1. & \dots & v_j & u_1 & v_2 & \dots \\
\end{array}$$
as in Figure~\ref{fig-polyhandle} increases the genus by 1 and induces the $(i+j+2)$-sided face
$$[u_1, u_2, \dotsc, u_i, u_1, v_1, v_2, \dotsc, v_j, v_1].$$
\label{cons-mergehandle}
\end{construction}

\begin{figure}[ht]
\centering
\includegraphics[scale=0.9]{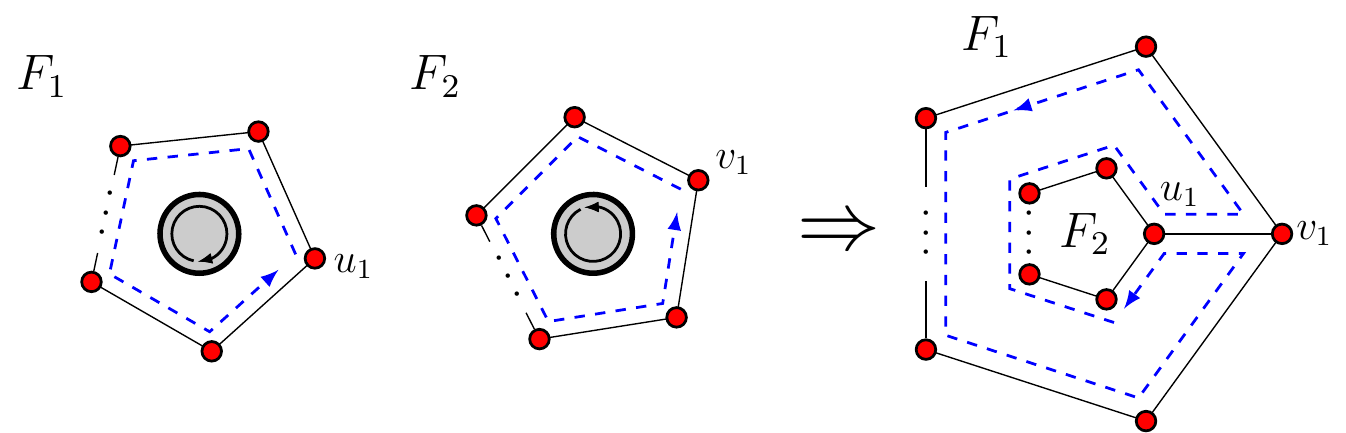}
\caption{Adding a handle between two faces, then adding an edge to transform the annulus into a cell. Note that the order of vertices of one of the faces becomes reversed as we traverse one of the (oriented) boundaries the annulus.}
\label{fig-polyhandle}
\end{figure}

The most elementary operation one can do is to simply add one edge to create a genus embedding:

\begin{proposition}
If there exists a triangular embedding $\phi: K_n{-}K_2 \to S_g$, then there exist a genus embedding of $K_n$ in the surface $S_{g+1}$.
\label{prop-add-k2}
\end{proposition}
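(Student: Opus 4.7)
The plan is to invoke Construction~\ref{cons-mergehandle} on a pair of faces chosen to straddle the missing edge, and then appeal to Proposition~\ref{prop-tight} to conclude that the resulting embedding has minimum genus.

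First, I would let $(\vora,\vorb)$ denote the unique missing edge of $K_n-K_2$. Since $\phi$ is a triangular embedding, every face is a $3$-cycle, and the number of triangular faces incident with any vertex equals its degree, which here is $n-2\geq 1$ for both $\vora$ and $\vorb$. Hence there is a triangular face $F_1$ containing $\vora$ and a triangular face $F_2$ containing $\vorb$. Because $\vora$ and $\vorb$ are not adjacent in $K_n-K_2$, no single face contains both of them, so $F_1\neq F_2$.

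Next, writing $F_1=[\vora,u_2,u_3]$ and $F_2=[\vorb,v_2,v_3]$, I would apply Construction~\ref{cons-mergehandle} with $u_1=\vora$ and $v_1=\vorb$. This inserts the missing edge $(\vora,\vorb)$ through a single added handle, producing a cellular embedding of $K_n$ in $S_{g+1}$ whose only non-triangular face is the octagon $[\vora,u_2,u_3,\vora,\vorb,v_2,v_3,\vorb]$. To see that this embedding is of minimum genus, apply Proposition~\ref{prop-tight} with $H_e=K_2$ (so $e=1<6$): a single handle has been used to restore a complete graph from its triangular embedding missing fewer than six edges, so the result is a genus embedding of $K_n$.

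There is essentially no substantive obstacle: the triangular hypothesis automatically supplies two distinct faces incident to $\vora$ and $\vorb$ respectively, and Construction~\ref{cons-mergehandle} requires no structural assumption on those faces beyond the identification of one boundary vertex in each. The only thing to verify is the distinctness $F_1\neq F_2$, which follows immediately from the non-adjacency of $\vora$ and $\vorb$.
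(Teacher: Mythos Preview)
Your argument is correct and follows the same route as the paper: pick faces incident with the two nonadjacent vertices and invoke Construction~\ref{cons-mergehandle} to add the missing edge across one handle. Your explicit appeal to Proposition~\ref{prop-tight} and the check that $F_1\neq F_2$ are fine added details; the paper's proof omits them as understood.
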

\begin{proof}
Let $v_1$ and $v_2$ be the two nonadjacent vertices. Let $F_i$ be a face incident with $v_i$ for $i = 1,2$ and apply Construction~\ref{cons-mergehandle}.
\end{proof}

The forthcoming additional adjacency solutions are to be applied on triangular embeddings of graphs of the form $K_n-K_\ell$, which is the graph formed by taking the complete graph $K_n$ and deleting all the pairwise adjacencies between $\ell$ vertices. We label the vertices missing adjacencies with bold letters $\vora, \vorb, \vorc, \dotsc, \vorh$. The remaining vertices will be assigned numbers and are represented here as unadorned letters ($u, v, p_i, \dotsc$). We apply the traditional method of adding handles to supply all the missing edges---in Section~\ref{sec-alternate}, we give an alternative viewpoint that aims to demystify the specific choices of added edges. 

\begin{lemma}
If there exists a triangular embedding of $K_n{-}K_5$ with numbered vertices $u$ and $v$ whose rotations are of the form
$$\begin{array}{rrrrrrrrrrrrrrrrrrrrrrrrrrrrrrrr}
u. & \dots & \textbf{a} & p_1 & \textbf{b} & p_2 & \textbf{c} & p_3 & \textbf{d} & p_4 & \textbf{e} & \dots
\end{array}$$
and
$$\begin{array}{rrrrrrrrrrrrrrrrrrrrrrrrrrrrrrrr}
v. & \dots & p_{\sigma(1)} & p_{\sigma(2)} & \dots & p_{\sigma(3)} & p_{\sigma(4)} &\dots,
\end{array}$$
where $\sigma: \{1,\dotsc,4\}\to\{1,\dotsc,4\}$ is some permutation, then there exist $(n,10)$- and $(n,4)$-triangulations and a genus embedding of $K_n$.
\label{lem-k5}
\end{lemma}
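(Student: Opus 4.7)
The $(n,10)$-triangulation is the hypothesized embedding itself, so nothing further is needed there. Writing $g$ for its genus, Euler's formula yields $\gamma(K_n)=g+2$ and shows that an $(n,4)$-triangulation sits in $S_{g+1}$; accordingly, I plan to produce the genus embedding of $K_n$ using two handle operations and the $(n,4)$-triangulation using a single handle operation.

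For the genus embedding of $K_n$, I plan to begin by applying Construction~\ref{prop-3handle} at $u$ with the grouping $y=[p_1,\vorb,p_2,\vorc]$, $z=[p_3,\vord,p_4,\vore]$, and $x$-group ending at $\vora$; the resulting $9$-sided face then has three of the missing vertices, namely $\vora$, $\vorc$, and $\vore$, on its boundary. Next, I plan to apply Construction~\ref{cons-mergehandle} to fuse this $9$-gon with a triangle at $u$ that contributes a fourth missing vertex, for instance $[u,p_1,\vorb]$, obtaining a $14$-sided face on whose boundary four distinct missing vertices appear (two of them, namely the merge endpoints, at doubled positions). Inside the $14$-gon, the merge edge together with non-crossing chords is intended to insert most of the ten edges of the missing $K_5$; the pairing structure at $v$---the consecutive pairs $p_{\sigma(1)}p_{\sigma(2)}$ and $p_{\sigma(3)}p_{\sigma(4)}$---is intended to supply the local rotation required to invoke Proposition~\ref{prop-add-k14} for any residual missing edges incident to the fifth missing vertex.

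For the $(n,4)$-triangulation, my plan is to carry out a truncated version of the above: apply a single handle operation together with six chord insertions corresponding to missing edges, with chunks and chord placements chosen so that the four remaining missing edges form a $K_{1,4}$ and the resulting embedding is fully triangular. By Proposition~\ref{prop-tight} this embedding is then exactly the triangular input of Proposition~\ref{prop-add-k14}, so it both is the required $(n,4)$-triangulation and provides the second-handle step in the $K_n$ construction above, interlocking the two plans.

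The main obstacle is that $K_5$ is non-planar, so its ten edges cannot all appear as non-crossing chords inside any single disk, and in any triangular embedding of $K_n-K_5$ no two missing vertices are adjacent in any vertex's rotation (such a pair would span a missing edge). Hence a single $9$-sided face produced by one application of Construction~\ref{prop-3handle} carries at most three missing vertices on its boundary. Distributing the ten missing edges as non-crossing chords across the faces produced by only two handle operations---while verifying that no chord duplicates an existing edge and that the intermediate $(n,4)$-stage is fully triangular---is the principal combinatorial difficulty; the rotation at $v$, with its two specified $p_\sigma$-pairs, is the key resource for navigating this packing and for providing the $K_{1,4}$-addition rotation demanded by Proposition~\ref{prop-add-k14}.
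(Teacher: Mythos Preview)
Your plan has a genuine gap. As you yourself note, a single application of Construction~\ref{prop-3handle} at $u$ puts at most three lettered vertices on the resulting $9$-gon, and merging in a triangle via Construction~\ref{cons-mergehandle} contributes at most one more. So after your two handles only four of $\vora,\dots,\vore$ lie on the $14$-gon; at most six of the ten $K_5$-edges can be drawn there, and the four edges to the absent fifth vertex (your $\vord$) still require a further handle. That makes three handles in all, whereas $\gamma(K_n)=g+2$. Your ``interlocked'' alternative---a \emph{single} handle yielding an $(n,4)$-triangulation with a missing $K_{1,4}$---would require inserting a full $K_4$ on four lettered vertices with one handle; but since no two lettered vertices are adjacent, every triangle of the starting embedding contains at most one of them, and none of Constructions~\ref{prop-3handle}, \ref{prop-4handle}, \ref{cons-mergehandle} can then manufacture a face carrying four. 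Finally, your proposed invocation of Proposition~\ref{prop-add-k14} is misaimed: the residual $K_{1,4}$ in your scheme has \emph{lettered} leaves, while the hypothesis on $v$ supplies consecutive pairs among the \emph{numbered} vertices $p_i$, so the $v$-condition never engages.

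The missing device is the edge flip, applied \emph{before} any handle. The paper first trades the five edges $(u,p_1),(u,\vorb),(u,p_2),(u,p_3),(u,p_4)$ for the five lettered edges $(\vora,\vorb),(\vora,\vorc),(\vorb,\vorc),(\vorc,\vord),(\vord,\vore)$ at zero genus cost. This creates the triangles $[\vora,\vorc,\vorb]$ and $[u,\vore,\vord]$, which one handle (Construction~\ref{cons-mergehandle}) then merges; into the resulting octagon go the remaining five lettered edges together with the restored $(u,\vorb)$, giving the $(n,4)$-triangulation. Its four missing edges are precisely $(u,p_1),\dots,(u,p_4)$, and \emph{now} the hypothesis on $v$ is exactly what Proposition~\ref{prop-add-k14} requires for the second handle. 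In short: sacrificing the $(u,p_i)$ edges up front is what makes the $v$-condition relevant and what allows a single handle to reach all five lettered vertices.
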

\begin{proof}
The initial embedding is an $(n,10)$-triangulation. First, delete the edges $(u, p_1)$, $(u, \vorb)$, $(u, p_2)$ in exchange for $(\vora, \vorb)$, $(\vora, \vorc)$, $(\vorb,\vorc)$ and apply edge flips on $(u, p_3)$ and $(u, p_4)$ to obtain $(\vorc, \vord)$ and $(\vord, \vore)$, as in Figure~\ref{fig-k5-add}(a). If we merge the faces $[\vora, \vorc, \vorb]$ and $[u, \vore, \vord]$ with a handle, we can recover the deleted edge $(u, \vorb)$ and add in the remaining edges between lettered vertices following Figure~\ref{fig-k5-add}(b). The missing edges $(u, p_1), \dotsc, (u, p_4)$ in this $(n,4)$-triangulation can be reinserted with one handle using Proposition~\ref{prop-add-k14}, setting $p_{\sigma(i)} = q_i$, to get a genus embedding of $K_n$. 

\begin{figure}[!ht]
    \centering
    \begin{subfigure}[b]{0.69\textwidth}
        \centering
        \includegraphics[scale=1.0]{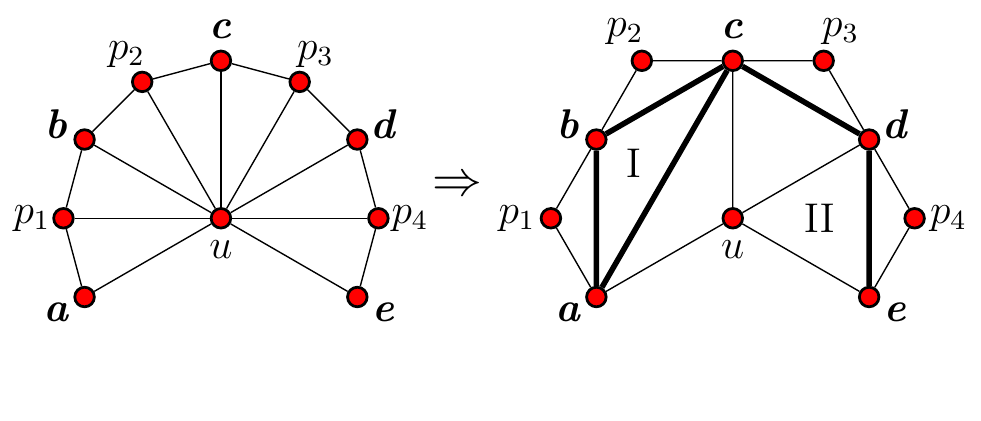}
        \caption{}
        \label{subfig-k5-a}
    \end{subfigure}
    \begin{subfigure}[b]{0.29\textwidth}
        \centering
        \includegraphics[scale=1.0]{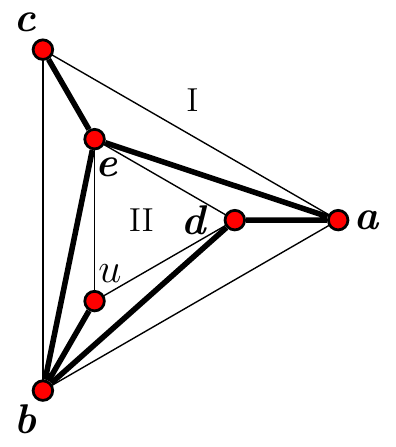}
        \caption{}
        \label{subfig-k5-b}
    \end{subfigure}
\caption{Various edge flips are applied in the neighborhood of vertex $u$ (a) so that one handle suffices for connecting all the lettered vertices.}
\label{fig-k5-add}
\end{figure}
\end{proof}

\begin{lemma}[Guy and Ringel~\cite{GuyRingel}]
If there exists a triangular embedding of $K_n{-}K_6$ with a numbered vertex $u$ whose rotations are of the form
$$\begin{array}{rrrrrrrrrrrrrrrrrrrrrrrrrrrrrrrr}
u. & \dots & \textbf{a} & p_1 & \textbf{b} & \dots & \textbf{c} & p_2 & \textbf{d} & \dots & \textbf{e} & p_3 & \textbf{f} & \dots,
\end{array}$$
then there exist $(n,15)$-, $(n,9)$-, and $(n,3)$-triangulations and a genus embedding of $K_n$.
\label{lem-k6}
\end{lemma}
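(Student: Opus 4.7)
The starting triangular embedding of $K_n-K_6$ is itself the $(n,15)$-triangulation, so it remains to produce the $(n,9)$- and $(n,3)$-triangulations and a genus embedding of $K_n$ using three additional handles, one per unit of genus increase. The plan is for the first two handles each to restore six currently-missing edges in a triangular fashion, and the third handle to restore the remaining three.

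For the first handle, I would apply edge flips on the three edges $(u, p_1), (u, p_2), (u, p_3)$ in the rotation at $u$, trading them for the missing edges $(\textbf{a}, \textbf{b})$, $(\textbf{c}, \textbf{d})$, $(\textbf{e}, \textbf{f})$. After these flips the six lettered vertices appear at $u$ as three adjacent pairs, so I can apply Construction~\ref{prop-3handle} at $u$ with three blocks chosen so that each lettered pair straddles a block boundary. This produces a 9-sided face whose boundary cycles through the six lettered vertices with three copies of $u$ interposed. Because $u$ is already adjacent to every lettered vertex, any diagonal of the 9-gon touching a $u$-copy would create a parallel edge; the three triangles abutting those copies are therefore forced and contribute three lettered--lettered diagonals, all of which lie among the still-missing pairs. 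Triangulating the residual hexagon of lettered vertices with three further such diagonals gives a triangular $(n,9)$-triangulation. The second handle---another 3-handle, or a 4-handle (Construction~\ref{prop-4handle}), applied where the local structure accommodates the remaining six lettered missing pairs---proceeds analogously to produce the $(n,3)$-triangulation, leaving as the only missing edges $(u, p_1), (u, p_2), (u, p_3)$, which form a $K_{1,3}$ centered at $u$; Corollary~\ref{cor-k13} then provides the third and final handle to yield a genus embedding of $K_n$.

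The main obstacle is the bookkeeping at each stage: verifying that every chosen diagonal is genuinely a missing edge (so that no parallel edges arise), that the intermediate embeddings remain triangular after each of the first two handles, and that the local structure required by the second handle is compatible with the choices made in the first. Since the lemma's hypothesis constrains only $u$'s rotation, some care is required to argue that a suitable second handle can always be realized using the residual structure of the embedding after the first step; in Guy and Ringel's original argument this is established by appeal to specific properties of the current graph underlying the initial triangulation, rather than from the abstract hypothesis alone.
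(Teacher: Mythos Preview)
Your first and third handles are exactly right and match the paper: the three edge flips followed by Construction~\ref{prop-3handle} at $u$, triangulating the resulting $9$-gon with six lettered--lettered diagonals to obtain the $(n,9)$-triangulation, and the final invocation of Corollary~\ref{cor-k13} to restore the $K_{1,3}$ at $u$.

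The gap is the second handle. Your hedge at the end---that Guy and Ringel establish this step ``by appeal to specific properties of the current graph underlying the initial triangulation, rather than from the abstract hypothesis alone''---is mistaken, and it is the reason you were unable to pin the step down. No information beyond the stated hypothesis on $u$'s rotation is used. The second handle is \emph{not} another rearrangement handle (Construction~\ref{prop-3handle} or~\ref{prop-4handle}) at some vertex, which would indeed require rotation data you do not possess. It is a face-merging handle (Construction~\ref{cons-mergehandle}) applied to two of the triangular faces \emph{you just created} when you triangulated the $9$-gon. All seven of those triangles have vertex sets contained in $\{\vora,\dotsc,\vorf,u\}$, so they are known explicitly from the hypothesis and your own choices in the first step. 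With the hexagon diagonals chosen judiciously, two of the resulting triangles (labelled I and II in the paper) can be merged so that the bridging edge together with the five diagonals needed to retriangulate the resulting $8$-gon are precisely the six remaining lettered--lettered non-edges, yielding the $(n,3)$-triangulation. Everything lives inside the $9$-gon; nothing about the ambient embedding or the current graph enters.
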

\begin{proof}
We first modify the embedding near vertex $u$ using edge flips to gain the edges $(\vora, \vorb)$, $(\vorc, \vord)$, and $(\vore, \vorf)$, as in Figure~\ref{fig-k6-add}(a). If we apply Construction~\ref{prop-3handle} to vertex $u$, we obtain a 9-sided face incident with all six vertices $\vora, \vorb, \dotsc, \vorf$. In Figure~\ref{fig-k6-add}(b) and (c), we give one way to insert the nine missing edges between these lettered vertices with the help of a handle.

\begin{figure}[!ht]
    \centering
    \begin{subfigure}[b]{0.80\textwidth}
        \centering
        \includegraphics[scale=1.0]{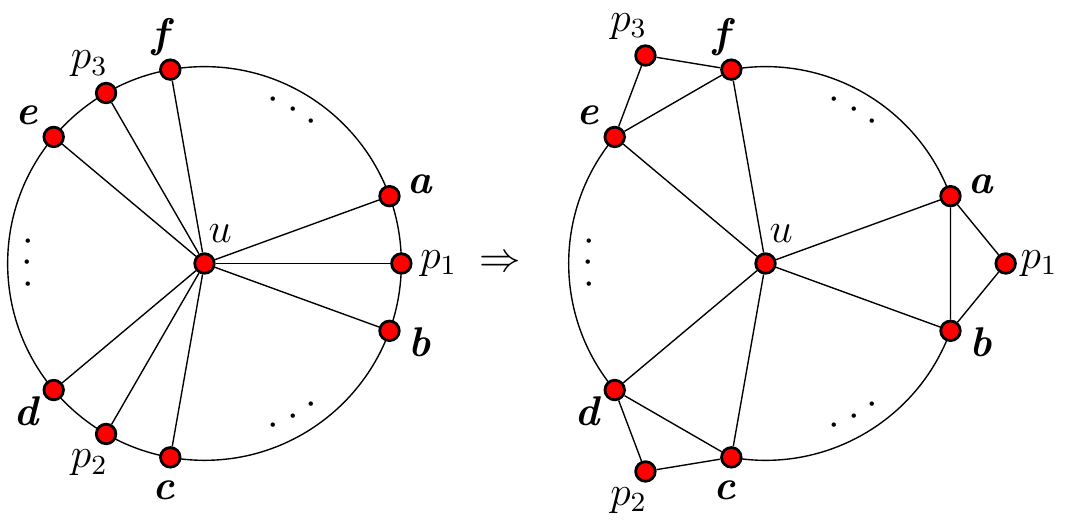}
        \caption{}
        \label{subfig-k6-a}
    \end{subfigure}
    \begin{subfigure}[b]{0.35\textwidth}
        \centering
        \includegraphics[scale=1.0]{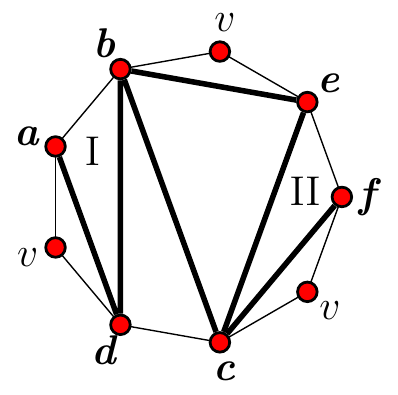}
        \caption{}
        \label{subfig-k6-b}
    \end{subfigure}
    \begin{subfigure}[b]{0.35\textwidth}
        \centering
        \includegraphics[scale=1.0]{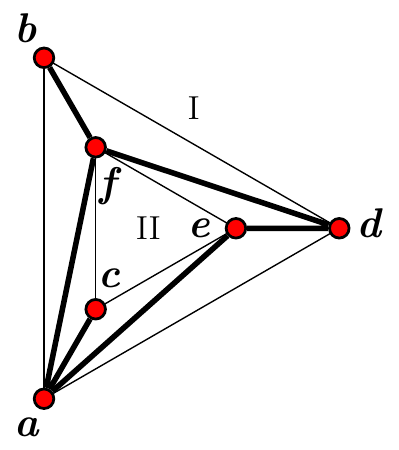}
        \caption{}
        \label{subfig-k6-c}
    \end{subfigure}
\caption{Three pairs of lettered vertices are connected with some edge flips (a), after which a handle adds some of the missing adjacencies (b). The remaining edges between lettered vertices are added using another handle merging faces I and II (c).}
\label{fig-k6-add}
\end{figure}

The missing edges $(u,p_1), (u,p_2), (u,p_3)$ can be added back using Corollary~\ref{cor-k13}, yielding a genus embedding of $K_n$.
\end{proof}

\begin{lemma}
If there exists a triangular embedding of $K_n{-}K_8$ with numbered vertices $u$ and $v$ whose rotations are of the form
$$\begin{array}{rrrrrrrrrrrrrrrrrrrrrrrrrrrrrrrr}
u. & \dots & \leta & p_1 & \letb & \dots & \letc & p_2 & \letd & \dots & \lete & p_3 & \letf & \dots & \letg & p_4 & \leth & \dots
\end{array}$$
and
$$\begin{array}{rrrrrrrrrrrrrrrrrrrrrrrrrrrrrrrr}
v. & \dots & p_{\sigma(1)} & p_{\sigma(2)} & \dots & p_{\sigma(3)} & p_{\sigma(4)} &\dots,
\end{array}$$
where $\sigma: \{1,\dotsc,4\}\to\{1,\dotsc,4\}$ is some permutation, then there exist $(n,28)$-, $(n,22)$-, $(n,16)$-, $(n,10)$-, and $(n,4)$-triangulations and a genus embedding of $K_n$.
\label{lem-k8}
\end{lemma}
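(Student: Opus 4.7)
The plan is to mirror the schemes of Lemmas~\ref{lem-k5} and~\ref{lem-k6}, constructing each successive triangulation in the chain $(n,28) \to (n,22) \to (n,16) \to (n,10) \to (n,4) \to K_n$ by a single handle, preceded by edge flips where convenient. Starting from the given $(n,28)$-triangulation, I would first perform four edge flips at $u$, replacing $(u,p_1), (u,p_2), (u,p_3), (u,p_4)$ with the lettered edges $(\leta,\letb), (\letc,\letd), (\lete,\letf), (\letg,\leth)$. The embedding remains an $(n,28)$-triangulation, but the four missing edges are now exactly the $(u,p_i)$, which will be reinstated collectively at the end via Proposition~\ref{prop-add-k14}.

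To descend to $(n,22)$, I would apply Construction~\ref{prop-4handle} at $u$, taking the four arcs of $u$'s rotation delimited by consecutive lettered pairs as the groups $x,y,z,w$. One handle creates two hexagonal faces, each incident with four distinct lettered vertices (and two occurrences of $u$); inserting three diagonals between lettered vertices in each hexagon triangulates both and introduces six new lettered edges. For the $(n,16)$-, $(n,10)$-, and $(n,4)$-triangulations in turn, I would apply Construction~\ref{cons-mergehandle} three times: at each step, choose two lettered triangular faces of the current embedding whose vertex sets together contain six of the still-missing lettered pairs, merge them via a missing lettered edge, and triangulate the resulting 8-sided face using five more missing lettered diagonals. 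Each such merge consumes one handle and installs six lettered adjacencies, with the four $(u,p_i)$ left untouched throughout.

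At the $(n,4)$-triangulation the only missing edges are the four $(u,p_i)$, which form a $K_{1,4}$ centered at $u$; the hypothesized rotation structure at $v$ is precisely the condition of Proposition~\ref{prop-add-k14} with $q_i=p_{\sigma(i)}$, so one final handle yields a genus embedding of $K_n$. The hard part, I expect, will be the combinatorial coordination among the three merge steps: each two-triangle merge installs only six new lettered edges, and all six must lie bipartite between the two triangle vertex sets and correspond to non-crossing chords in the merged 8-sided polygon. In particular, the residual 6-edge subgraph on 6 lettered vertices just before the third merge must be bipartite-realizable on a pair of triangles already present in the embedding, which forces the hexagon-diagonal choices in the Construction~\ref{prop-4handle} step and the face-pair choices at the earlier merges to be planned jointly in advance so that the final six missing lettered pairs reduce exactly to this configuration.
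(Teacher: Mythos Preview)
Your outline is essentially the paper's approach: four edge flips at $u$, then Construction~\ref{prop-4handle} for the first handle, three further handles via Construction~\ref{cons-mergehandle} to supply the remaining lettered adjacencies, and finally Proposition~\ref{prop-add-k14} at $v$ for the four $(u,p_i)$ edges. The paper in fact cites Ringel and Youngs' Case~2-CG solution for the first four handles verbatim, so your reconstruction of that skeleton is on target.

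There is one small structural difference worth noting. You fully triangulate each hexagon after the first handle with three lettered diagonals, so that your second handle merges two lettered \emph{triangles}. The paper instead adds only two chords per hexagon (the pairs $(\vord,\vorg),(\vorc,\vorh)$ and $(\vorb,\vore),(\vora,\vorf)$), leaving two quadrilateral faces; the $(n,22)$-triangulation is then a side branch obtained by triangulating those quadrilaterals arbitrarily, while the main line merges the two \emph{quadrilaterals} with the second handle. Merging two quads yields a $10$-gon rather than an $8$-gon, which gives noticeably more room (eight new edges rather than six) and makes the ``combinatorial coordination'' you flag as the hard part substantially easier to carry out. Your variant can be made to work, but you will find the bookkeeping for three successive triangle--triangle merges tighter than necessary; following the paper's quad--quad merge at the second step relaxes the constraints on the third and fourth handles.
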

\begin{proof}
The first four handles of our additional adjacency approach is the same as that of Ringel and Youngs' solution for Case 2-CG~\cite{RingelYoungs-Case2} (also see Ringel~\cite[\S7.5]{Ringel-MapColor}), with different vertex names. We perform an edge flip on each edge $(u, p_i)$ for $i = 1, \dotsc, 4$, gaining the edges $(\vora,\vorb)$, $(\vorc,\vord)$, $(\vore,\vorf)$, and $(\vorg,\vorh)$. Now, the rotation at vertex $u$ is of the form
$$\begin{array}{rrrrrrrrrrrrrrrrrrrrrrrrrrrrrrr}
u. & \dots & \vora & \vorb & \dots & \vorc & \vord & \dots & \vore & \vorf & \dots & \vorg & \vorh & \dots
\end{array}$$
These edge flips are depicted in Figure~\ref{fig-vort8}. Applying Construction~\ref{prop-4handle} to this resulting rotation yields two nontriangular faces
$$[\vorh, \vorg, v, \vord, \vorc, v]\textrm{\hspace{0.25cm}and\hspace{0.25cm}}[\vorf, \vore, v, \vorb, \vora, v].$$

\begin{figure}[ht]
\centering
\includegraphics{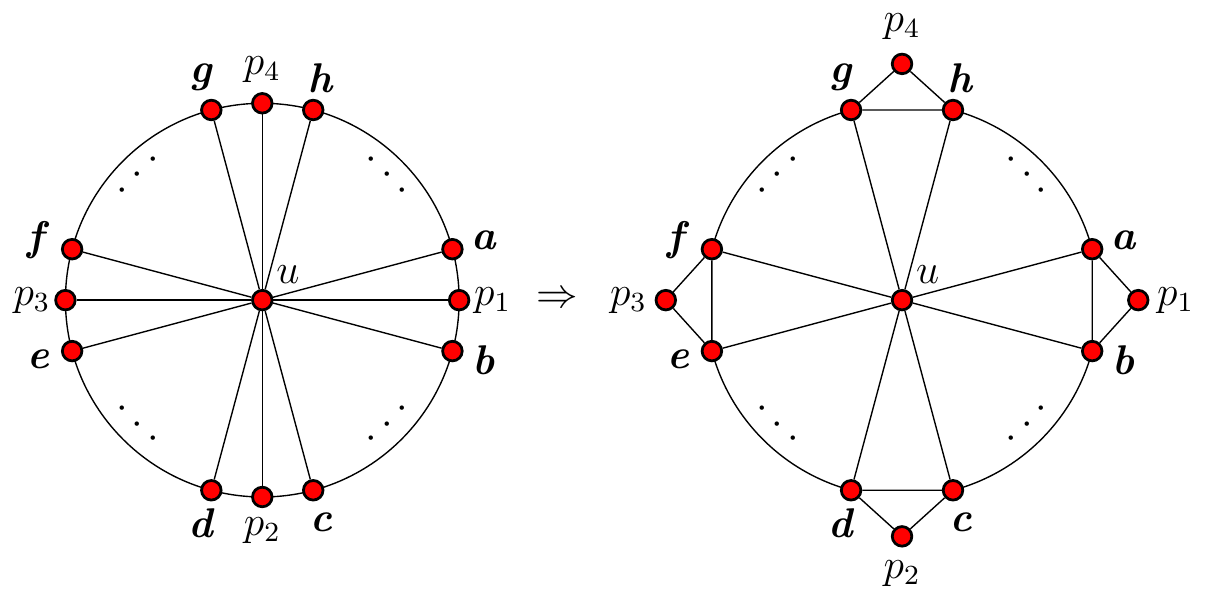}
\caption{Initial edge flips to join some of the vortex letters.}
\label{fig-vort8}
\end{figure}

In these faces, we induce two quadrilateral faces by adding the edges $(\vord, \vorg)$, $(\vorc, \vorh)$, $(\vorb, \vore)$, and $(\vora, \vorf)$, as in Figure~\ref{fig-k8-add}(a). Three more handles are used to add all the remaining edges between lettered vertices $\vora, \dotsc, \vorh$ as shown in Figure~\ref{fig-k8-add}(bc). At this point, the embedding is of the graph $K_n-K_{1,4}$ and is still triangular, so we replace the deleted edges $(u, p_i)$ with one handle using Proposition~\ref{prop-add-k14} to obtain a genus embedding of $K_n$. 

\begin{figure}[!ht]
    \centering
    \begin{subfigure}[b]{0.6\textwidth}
        \centering
        \includegraphics[scale=1.0]{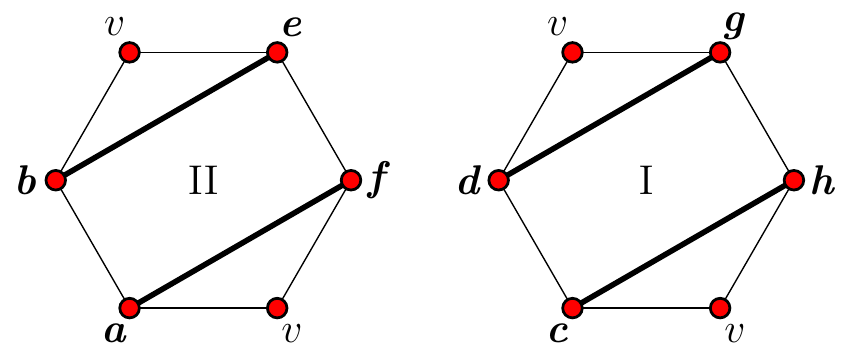}
        \caption{}
        \label{subfig-k8-a}
    \end{subfigure}
    \begin{subfigure}[b]{0.3\textwidth}
        \centering
        \includegraphics[scale=1.0]{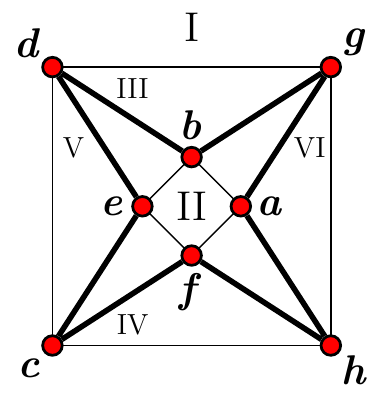}
        \caption{}
        \label{subfig-k8-b}
    \end{subfigure}
    \begin{subfigure}[b]{0.5\textwidth}
        \centering
        \includegraphics[scale=1.0]{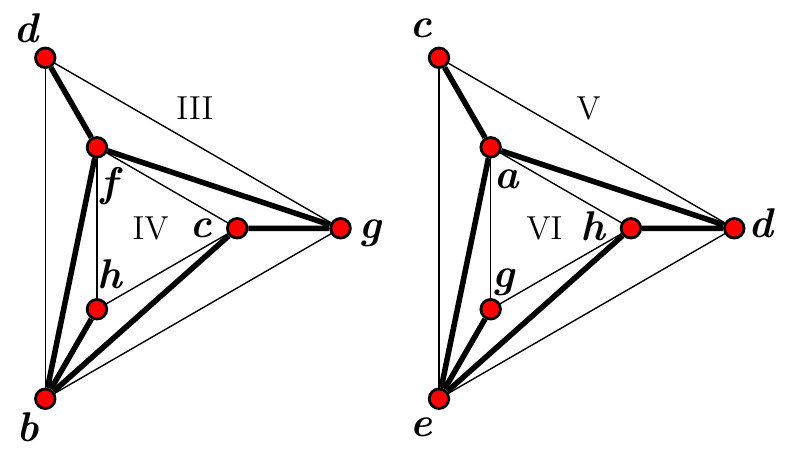}
        \caption{}
        \label{subfig-k8-c}
    \end{subfigure}
\caption{After connecting some of the lettered vertices with a handle (a), another handle can be introduced in between the faces I and II (b). Using faces generated from this handle (III and IV, V and VI), we can add all the remaining edges using two additional handles (c).}
\label{fig-k8-add}
\end{figure}

The embeddings after adding the second through fourth handles are all triangular and hence are minimum triangulations. After adding only the first handle, the two quadrilateral faces in Figure~\ref{fig-k8-add}(a) can be triangulated arbitrarily to form an $(n,22)$-triangulation.
\end{proof}

We note some recurring themes in these additional adjacency solutions, which one could construe as another layer of unification between Cases. The ``chord'' edges and subsequent handle for connecting five vortices in Lemma~\ref{lem-k5} reappear in Lemma~\ref{lem-k6}. Proposition~\ref{prop-add-k14} is invoked in both Lemma~\ref{lem-k5} and \ref{lem-k8}. As mentioned earlier, most of the construction in Lemma~\ref{lem-k8} was applied to Case2-CG by Ringel and Youngs~\cite{RingelYoungs-Case2}.

It seems that nowhere in the literature, including in the original proof of the Map Color Theorem, is there a construction of a genus embedding of $K_n$ derived from an $(n,4)$-triangulation. Even though we outlined a natural approach in Proposition~\ref{prop-add-k14} for converting an $(n,4)$-triangulation to a genus embedding of $K_n$, no prior such unification was known. 

\subsection{Recasting handle operations}\label{sec-alternate}

Additional adjacency solutions are traditionally presented as a sequence of handles, which has the benefit of constructing some of the requisite minimum triangulations. However, when several handles are involved, it is not immediately apparent how such a construction was derived---Ringel~\cite{Ringel-MapColor} described the solution for Case 2-CG, which is largely identical to the one we used in Lemma~\ref{lem-k8}, as ``adventurous'' and ``much easier to understand than to discover.'' We can instead interpret these operations as generalized diamond sum-like operations~\cite{Bouchet-Diamond} using known embeddings. In our case, we make use of the embedding of $K_6$ in the torus formed by deleting a vertex from the triangular embedding of $K_7$, and the genus embedding of $K_8$ in the two-torus, where the two quadrilateral faces are incident with disjoint sets of vertices. Such an embedding appears in Ringel \cite[p.79]{Ringel-MapColor} and is reproduced in Appendix~\ref{app-sporadic}.

Recall that in Lemma~\ref{lem-k8}, the second, third, and fourth handles add all the remaining missing edges between lettered vertices, where all the activity takes place inside of the two quadrilateral faces formed from the first handle. Let $\phi: G \to S_g$ be the embedding of the graph after the first handle in Lemma~\ref{lem-k8}. Combining the next three handles into one step is equivalent to the following procedure, which is sketched in Figure~\ref{fig-diamond8}:

\begin{itemize}
\item Excise the interiors of the quadrilateral faces of $\phi$ and the aforementioned embedding $K_8 \to S_2$.
\item Identify the two embedded surfaces at their boundaries so that the two disjoint sets of four vertices become identified and the resulting surface is orientable.
\end{itemize}

\begin{figure}[ht]
\centering
\includegraphics[scale=1.2]{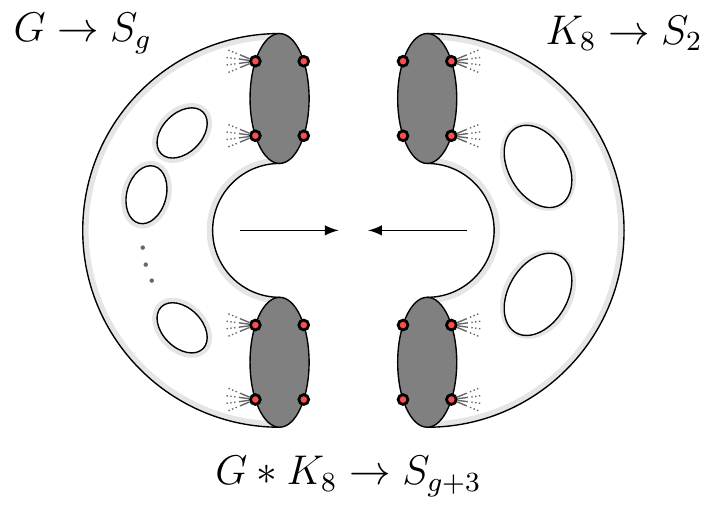}
\caption{Adding adjacencies between eight vertices with an embedding of $K_8$. Note that the genus increases by 3 since two boundary components are identified.}
\label{fig-diamond8}
\end{figure}

Hence the three handles are equivalent to a way of finding a genus embedding of $K_8$. We may also apply the same idea to reintepret the constructions in Lemma~\ref{lem-k5} and \ref{lem-k6} using the embedding of $K_6$. If, for example, we remove the edges $(\vorb, \vorc)$, $(\vorb,\vord)$, and $(\vorc, \vore)$ from Figure~\ref{fig-k6-add}, we have the hexagonal face $[\vora, \vord, \vorc, \vorf, \vore, \vorb]$. The goal of the last handle of the additional adjacency step in Lemma~\ref{lem-k6} is to add all the remaining edges between the lettered vertices, which we may accomplish by attaching the embedding of $K_6$ along this hexagonal face, as shown in Figure~\ref{fig-diamond7}.

\begin{figure}[ht]
\centering
\includegraphics[scale=1.2]{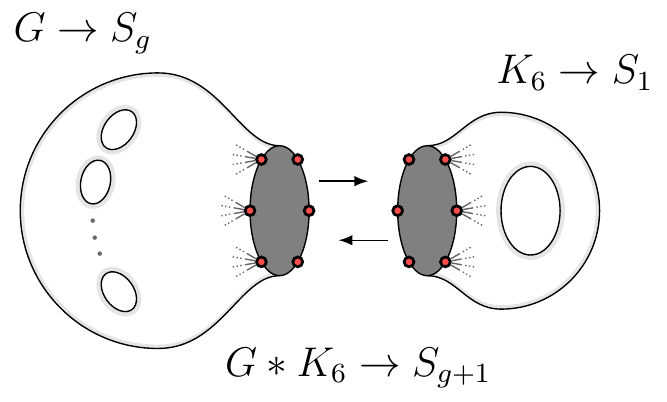}
\caption{An alternative way of adding the edges between six vertices using one handle.}
\label{fig-diamond7}
\end{figure}

\section{Index 3 current graphs}

We assume familiarity with current graphs, especially \S9 of Ringel~\cite{Ringel-MapColor}.  An \emph{index $k$ current graph} is a triple $(D, \phi, \alpha)$, where $D$ is a directed graph, $\phi: D \to S$ is a cellular $k$-face embedding of $D$ in an orientable surface $S$ and $\alpha: E(D) \to \Gamma$ is a labeling of each arc of $D$ with a \emph{current}, an element of a group $\Gamma$. In this paper, we only consider index 3 current graphs with cyclic current groups $\Gamma = \Z_{3m}$ for some integer $m$. Its three face boundary walks, which we call \emph{circuits}, are labeled $[0]$, $[1]$, and $[2]$.

The \emph{excess} of a vertex is the sum of the incoming currents minus the sum of the outgoing currents, and we say a vertex satisfies \emph{Kirchhoff's current law} (KCL) if its excess is 0. Vertices of degree 3 which do not satisfy KCL are called \emph{vortices}, which are each labeled with a bold lowercase letter. The \emph{log} of a circuit records the currents encountered along the walk in the following manner: if we traverse arc $e$ along its orientation, we write down $\alpha(e)$; otherwise, we write down $-\alpha(e)$; if we encounter a vortex, we record its label.

All of our index 3 current graphs with current groups $\Z_{3m}$ satisfy the following additional ``construction principles'', which are effectively the same as those in \S9.1 of Ringel~\cite{Ringel-MapColor}:

\begin{enumerate}
\item[(E1)] Each vertex is of degree 3 or 1.
\item[(E2)] The embedding has three circuits labeled $[0], [1], [2]$.
\item[(E3)] Each nonzero element $\gamma \in \Z_{3m}$ appears exactly once in the log of each circuit.
\item[(E4)] KCL is satisfied at every vertex of degree 3, except vortices, which are labeled with letters.
\item[(E5)] Every vortex is incident with all three circuits and has an excess which generates the subgroup of $\Z_{3m}$ consisting of the multiples of 3. 
\item[(E6)] If circuit $[a]$ traverses arc $e$ along its orientation and circuit $[b]$ traverses $e$ in the opposite direction, then $\alpha(e) \equiv b-a \pmod{k}$. 
\item[(E7)] The current on every arc incident with a vertex of degree 1 is of order 2 or 3 in $\Z_{3m}$.
\end{enumerate}

If all the construction principles are satisfied, the current graph generates a triangular embedding of the graph $K_{3m}+\overline{K_\ell}$, where $G+H$ is the graph join operation and $\ell$ is the number of vortices. Each element of $\Z_{3m}$ corresponds to a vertex in the complete graph $K_{3m}$, and each of the vortices provides an additional vertex, which is adjacent to all elements of $\Z_{3m}$, but none of the other vortex vertices. It is more common to think of the resulting graph instead as $K_{3m+\ell}-K_\ell$, which emphasizes the total number of vertices and the number of missing edges needed to form a complete graph. An example of an index 3 current graph is given in Figure~\ref{fig-current-c5s1}. The logs of its circuits are:

\begin{figure}[ht]
\centering
\includegraphics[scale=0.8]{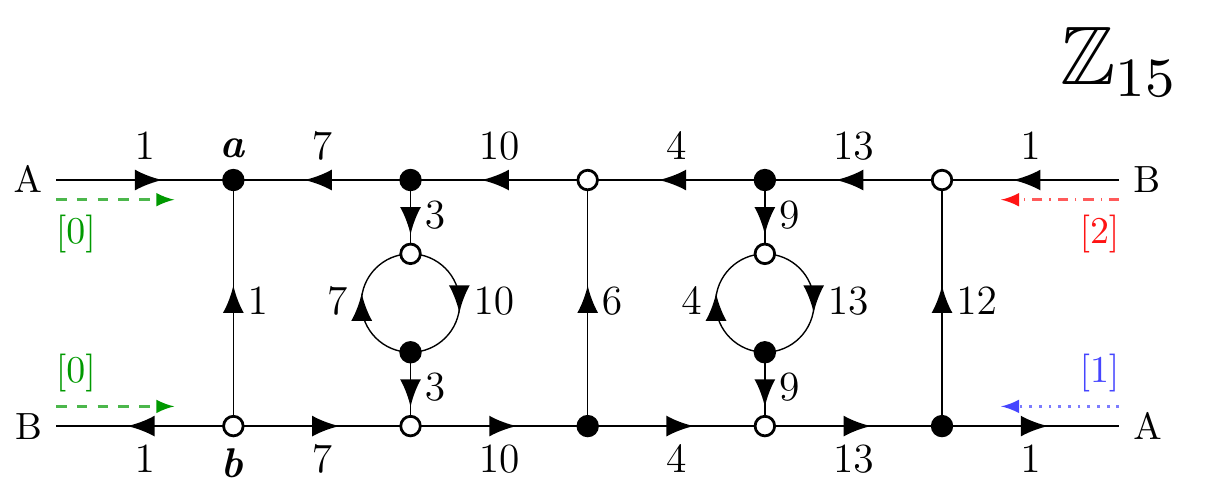}
\caption{A current graph for $K_{17}-K_2$. Solid and hollow vertices correspond to clockwise and counterclockwise rotations, respectively.}
\label{fig-current-c5s1}
\end{figure}

$$\begin{array}{rrrrrrrrrrrrrrrrrrrrrrrrrrrrrrrr}
\lbrack0\rbrack. & 1 & \vora & 8 & 5 & 9 & 4 & 13 & 12 & 14 & \vorb & 7 & 10 & 6 & 11 & 2 & 3 \\
\lbrack1\rbrack. & 14 & 2 & 6 & 4 & 13 & 9 & 11 & 5 & 12 & 7 & 10 & 3 & 8 & \vorb & 1 & \vora \\
\lbrack2\rbrack. & 1 & 13 & 9 & 11 & 2 & 6 & 4 & 10 & 3 & 8 & 5 & 12 & 7 & \vora & 14 & \vorb
\end{array}$$

To generate the embedding from the logs of these circuits, for each element $\gamma \in \Z_{3m}$ in the group, the rotation at vertex $\gamma$ is found by taking the log of circuit $[\gamma \bmod{k}]$ and adding $\gamma$ to each of its non-letter elements. The rotations at the numbered vertices thus read:

$$\begin{array}{rrrrrrrrrrrrrrrrrrrrrrrrrrrrrrrr}
0. & 1 & \vora & 8 & 5 & 9 & 4 & 13 & 12 & 14 & \vorb & 7 & 10 & 6 & 11 & 2 & 3 \\
1. & 0 & 3 & 7 & 5 & 14 & 10 & 12 & 6 & 13 & 8 & 11 & 4 & 9 & \vorb & 2 & \vora \\
2. & 3 & 0 & 11 & 13 & 4 & 8 & 6 & 12 & 5 & 10 & 7 & 14 & 9 & \vora & 1 & \vorb \\
3. & 4 & \vora & 11 & 8 & 12 & 7 & 1 & 0 & 2 & \vorb & 10 & 13 & 9 & 14 & 5 & 6 \\
4. & 3 & 6 & 10 & 8 & 2 & 13 & 0 & 9 & 1 & 11 & 14 & 7 & 12 & \vorb & 5 & \vora \\
5. & 6 & 3 & 14 & 1 & 7 & 11 & 9 & 0 & 8 & 13 & 10 & 2 & 12 & \vora & 4 & \vorb \\
6. & 7 & \vora & 14 & 11 & 0 & 10 & 4 & 3 & 5 & \vorb & 13 & 1 & 12 & 2 & 8 & 9 \\
7. & 6 & 9 & 13 & 11 & 5 & 1 & 3 & 12 & 4 & 14 & 2 & 10 & 0 & \vorb & 8 & \vora \\
8. & 9 & 6 & 2 & 4 & 10 & 14 & 12 & 3 & 11 & 1 & 13 & 5 & 0 & \vora & 7 & \vorb \\
9. & 10 & \vora & 2 & 14 & 3 & 13 & 7 & 6 & 8 & \vorb & 1 & 4 & 0 & 5 & 11 & 12 \\
10. & 9 & 12 & 1 & 14 & 8 & 4 & 6 & 0 & 7 & 2 & 5 & 13 & 3 & \vorb & 11 & \vora \\
11. & 12 & 9 & 5 & 7 & 13 & 2 & 0 & 6 & 14 & 4 & 1 & 8 & 3 & \vora & 10 & \vorb \\
12. & 13 & \vora & 5 & 2 & 6 & 1 & 10 & 9 & 11 & \vorb & 4 & 7 & 3 & 8 & 14 & 0 \\
13. & 12 & 0 & 4 & 2 & 11 & 7 & 9 & 3 & 10 & 5 & 8 & 1 & 6 & \vorb & 14 & \vora \\
14. & 0 & 12 & 8 & 10 & 1 & 5 & 3 & 9 & 2 & 7 & 4 & 11 & 6 & \vora & 13 & \vorb
\end{array}$$

The rotation around each lettered vertex is ``manufactured'' so that the entire embedding is triangular and orientable. To facilitate this process, we make use of the following characterization of triangular embeddings:

\begin{proposition}[{e.g., Ringel~\cite[\S 2.3]{Ringel-MapColor}}]
An embedding of a simple graph $G$ is triangular if and only if for all vertices $i,j,k$, if the rotation at vertex $i$ is of the form
$$\begin{array}{rrrrrrrrrrrrrrrrrrrrrrrrrrrrrrrr}
i. & \dots & j & k & \dots,
\end{array}$$
then the rotation at vertex $j$ is of the form
$$\begin{array}{rrrrrrrrrrrrrrrrrrrrrrrrrrrrrrrr}
j. & \dots & k & i & \dots
\end{array}$$
\label{prop-ruler}
\end{proposition}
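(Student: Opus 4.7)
The plan is to prove both implications by analyzing the face corners around each vertex and applying the face-tracing rule derived from the Heffter--Edmonds principle. The key observation is that a consecutive pair $j,k$ in the rotation at $i$ corresponds to a single corner of a single face incident with $i$, whose two bounding edges at $i$ are $(i,j)$ and $(i,k)$. Since $G$ is simple, the neighbors in the rotation uniquely identify these two edges, so the correspondence between consecutive pairs in rotations and face corners is bijective.

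For the forward direction, I would assume the embedding is triangular. Let $i,j,k$ be such that the rotation at $i$ reads $\dots j\ k\dots$. The face corner at $i$ determined by this pair is bounded by the edges $(i,j)$ and $(i,k)$, and since the face is a triangle containing these two edges, its three vertices must be $i,j,k$. The ambient orientation (clockwise rotations induce counterclockwise face boundaries) forces the same triangle, read from $j$, to have its two edges incident with $j$ appear as $(j,k)$ followed by $(j,i)$ in $j$'s rotation, i.e.\ $\dots k\ i\dots$. I would verify this orientation statement by tracing the boundary of the triangle in the consistent orientation once, using the convention fixed in the excerpt.

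For the reverse direction, I would assume the stated consecutive-pair condition and take an arbitrary face $F$ of the embedding. Starting from any corner of $F$ at some vertex $i$, whose bounding edges are $(i,j)$ and $(i,k)$ with $j,k$ consecutive in $i$'s rotation, the hypothesis gives that $k,i$ are consecutive in $j$'s rotation, so the next corner along $F$ is the one at $j$ bounded by $(j,k)$ and $(j,i)$; in particular the next edge of $F$ after $(i,j)$ is $(j,k)$. Applying the hypothesis once more at the pair $k,i$ in $j$'s rotation gives that $i,j$ are consecutive in $k$'s rotation, so after $(j,k)$ comes the edge $(k,i)$, closing up the face as the triangle $[i,j,k]$. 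Thus every face is a triangle and the embedding is triangular.

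The main obstacle, really the only nonroutine point, is pinning down the orientation convention: one must check that ``$k$ follows $j$ at $i$'' forces ``$i$ follows $k$ at $j$'' (rather than the reverse) when the face is oriented consistently with the surface's orientation. I would handle this by explicitly tracing the boundary of a single triangle with the convention stated just before Proposition~\ref{prop-triangle} (clockwise rotations, counterclockwise faces), which fixes the cyclic direction and makes the rest of the argument purely mechanical.
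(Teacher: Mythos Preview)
Your argument is correct and is essentially the standard one. Note, however, that the paper does not supply its own proof of this proposition: it is stated with a citation to Ringel~\cite[\S2.3]{Ringel-MapColor} and used as a black box, so there is nothing to compare against here. Your treatment of both directions is sound, and you correctly isolate the only genuine issue, namely the orientation convention determining that $\dots j\ k\dots$ at $i$ forces $\dots k\ i\dots$ (rather than $\dots i\ k\dots$) at $j$; a single explicit face trace under the paper's ``clockwise rotations, counterclockwise faces'' convention settles this, exactly as you propose.
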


From the partial rotation system we have built up so far, we can determine the rotations at the remaining vortex vertices:

$$\begin{array}{rrrrrrrrrrrrrrrrrrrrrrrrrrrrrrrr}
\vora. & 0 & 1 & 2 & 9 & 10 & 11 & 3 & 4 & 5 & 12 & 13 & 14 & 6 & 7 & 8 \\
\vorb. & 0 & 14 & 13 & 6 & 5 & 4 & 12 & 11 & 10 & 3 & 2 & 1 & 9 & 8 & 7
\end{array}$$

The final embedding is a triangular one of $K_{17}-K_2$, which is a $(17,1)$-triangulation. It can be augmented into a genus embedding of $K_{17}$ using Proposition~\ref{prop-add-k2}.

The group we use for most of our constructions, including all infinite families, is $\Z_{12s+3}$. By combining construction principles (E6) and (E7), we find that in order to have a degree 1 vertex using this group, it must be the case that $s \equiv 2 \pmod{3}$. Thus, we only make use of degree 1 vertices and principle (E7) in a few \emph{ad hoc} constructions. 

The increased flexibility acquired from using index 3 current graphs is crucial. Since vortices have the same degree as other vertices, one can tweak the number of vortices while keeping the number of total vertices and edges fixed, i.e., one cannot rule out the existence of such current graphs using divisibility conditions on the numbers of vertices and edges alone. Furthermore, the conditions in Lemma~\ref{lem-k5}, i.e., having all five vortices lined up nearly consecutively, is only possible for current graphs with index at least 3. For index 1 and 2, such a configuration would violate a ``global'' KCL condition.  

A sketch of the standard proof of Case 5-CG (see Ringel~\cite[\S9.2]{Ringel-MapColor} or Youngs~\cite{Youngs-3569}) is given first, as we reuse significant parts of its structure for our current graphs. The case $s = 1$ was given earlier in Figure~\ref{fig-current-c5s1}, and the higher order cases are given in Figures~\ref{fig-current-c5s2} and \ref{fig-current-c5sgen}. The construction also works trivially for $s=0$ as well. 

\begin{figure}[ht]
\centering
\includegraphics[scale=0.8]{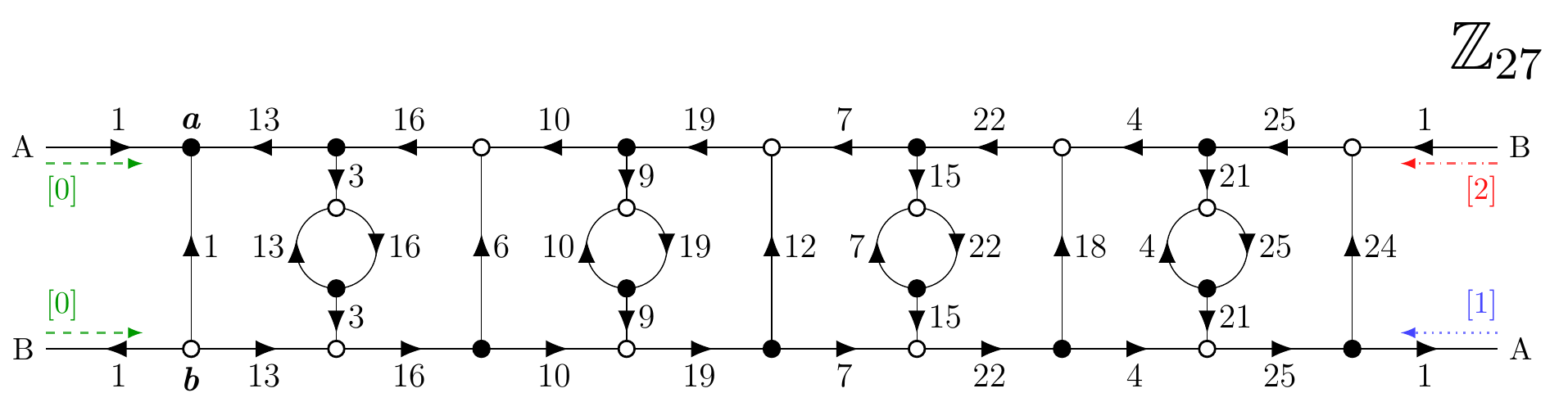}
\caption{A current graph for $K_{29}-K_2$.}
\label{fig-current-c5s2}
\end{figure}

\begin{figure}[ht]
\centering
\includegraphics[scale=0.8]{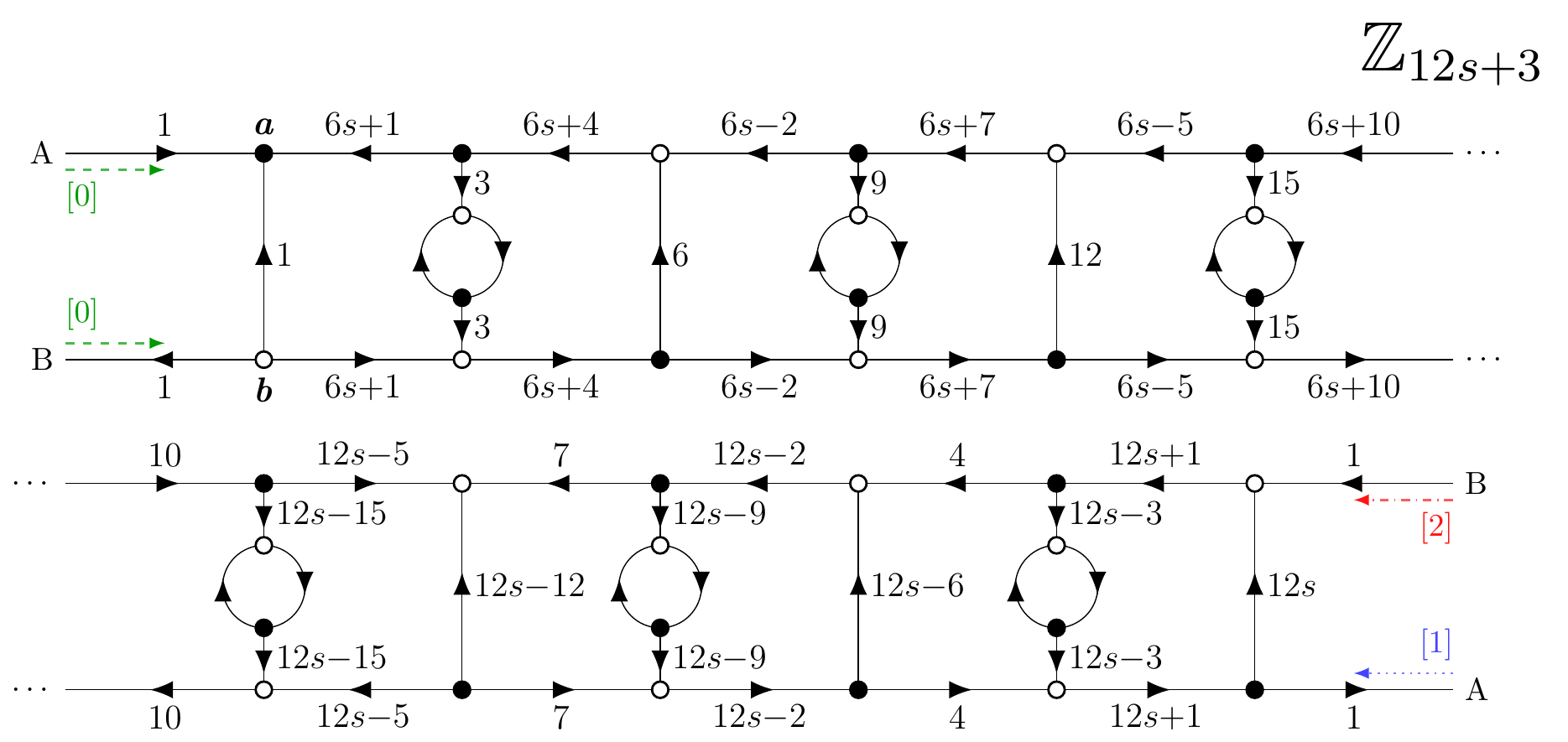}
\caption{The family of current graphs for $K_{12s+5}-K_2$, for general $s$. The omitted current on a circular arc is the same as those on the horizontal arcs above and below it.}
\label{fig-current-c5sgen}
\end{figure}

The general shape of the family of current graphs is a long ladder whose ``rungs'' alternate between simple vertical arcs and so-called ``globular rungs,'' where two vertices have a pair of parallel edges between them. As we parse from left to right, the vertical arcs, except for the arc connecting the two vortices, alternate in direction and form the arithmetic sequence consisting of the nonzero multiples of 3 in $\Z_{12s+3}$. The zigzag pattern induced on the horizontal arcs is essentially the canonical graceful labeling of a path graph on $4s{+}1$ vertices (see, e.g., Goddyn \emph{et al.} \cite{Goddyn-Exponential} for more information on this connection), where the vertical arcs correspond to the edge labels on the path graph. The horizontal arcs come in pairs that share the same current and are oriented in opposite directions. The currents on these arcs exhaust all the elements of the form $3k{+}1$ in $\Z_{12s+3}$. 

To see that construction principle (E3) is satisfied, the circuit $[0]$ traverses each pair of horizontal arcs twice in the rightward direction, so each element $3k{+}1$ and its inverse appear in the log of the circuit. Circuits $[1]$ and $[2]$ pass through only one arc of each pair of horizontal arcs---they each pass through the inverse of that current on one of the parallel arcs in a nearby globular rung. For the multiples of 3, note that for each such element and its inverse, exactly one of them appears as vertical arcs on a globular rung, and one appears on a simple rung. For the former, both circuits $[1]$ and $[2]$ will make use of such arcs in both directions, and for the latter, circuit $[0]$ will pass through in both directions. 

We utilize this family of current graphs in the following way: for the general cases of current graph constructions, they all consist of 
\begin{itemize}
\item A \emph{fixed portion}, which contains vortices and some salient currents for additional adjacency solutions. The underlying directed graph stays the same, while the currents may vary as a function of $s$. 
\item A \emph{varying portion}, which subsumes all remaining currents not present in the fixed portion. The size of this ingredient varies as a function of $s$, and the currents are arranged in a straightforward pattern.
\end{itemize}

In the construction for Case 5, we might consider the vortices and its incident edge ends as the fixed portion, and the rest of the graph (see Figure~\ref{fig-boseladder}) as the varying portion. The solutions for Case 3 and 5 of the Map Color Theorem is, conincidentally or not, intimately connected to Bose's construction for Steiner triple systems on $6s{+}3$ elements (see Grannell~\emph{et al.}~\cite{Grannell-SurfaceEmbeddings}) and are prized for their simplicity. For these reasons, we consider this varying portion, which we call the \emph{Bose ladder}, to be the best possible choice for index 3 current graphs. 

The approach of Youngs~\emph{et al.}~\cite{Youngs-3569, GuyYoungs-Smooth, GuyRingel} was to first finalize the fixed portion and then solve certain labeling problems (so-called ``zigzag'' and ``chord'' problems) to deal with the varying portion. We tackle the problem in reverse, opting to massage the fixed portion around a preset varying portion, which we choose to be a contiguous subset of the Bose ladder. Starting with the arc labeled $1$ that runs between the two vortices, we successively peel off rungs of the Bose ladder until we have enough material for our desired fixed portion. 

\begin{figure}[ht!]
\centering
\includegraphics[scale=0.8]{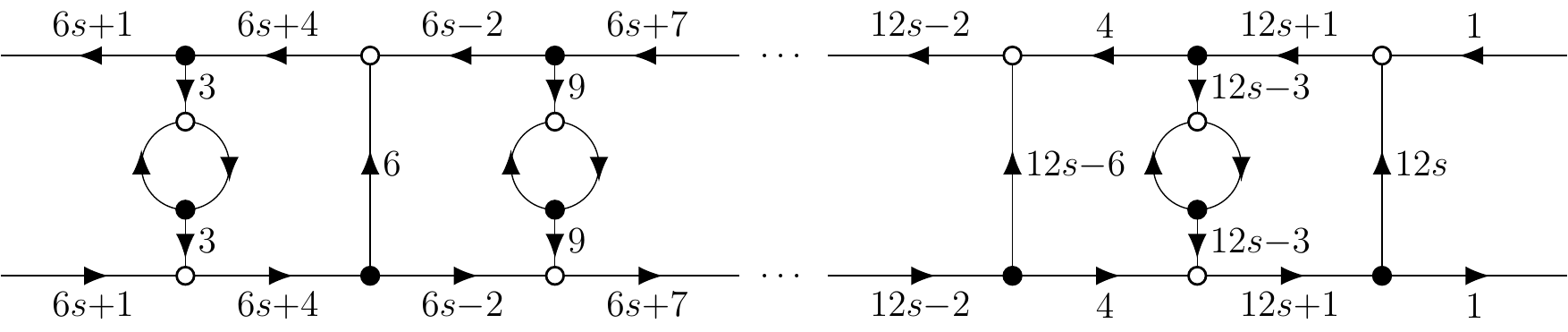}
\caption{The Bose ladder is essentially the current graphs for Case 5 with two vertices deleted.}
\label{fig-boseladder}
\end{figure}

We expect this procedure to become more difficult as the number of vortices increases---not only do we need appropriate currents that feed into the vortices, but there becomes an imbalance between the currents which are not divisible by 3 and those which are. Each vortex will use three currents of the former type, leaving a surplus of those of the latter type. To correct this effect, we make use of the \emph{double bubble} in Figure~\ref{fig-doublebubble}, which is essentially two globular rungs joined together. By tracing out the partial circuits and invoking construction principle (E6), we find that all six currents entering the highest and lowest vertices must be divisible by 3, while the four remaining arcs may be labeled with an element not divisible by 3 depending on which circuits touch this gadget. The double bubble and its generalization have appeared in other work regarding current graphs of index greater than 1, such as Korzhik and Voss~\cite{KorzhikVoss} and Pengelley and Jungerman~\cite{Pengelley-Index4}.

In all of our current graph constructions, we use the cyclic group $\mathbb{Z}_{12s+3}$ unless we specify otherwise. While we often simplify the labels by reversing the directions of some arcs, e.g. replacing a label like $12s{+}1$ with $2$, the ends which connect to the Bose ladder are kept unchanged, i.e., as a current which is congruent to $1 \pmod{3}$. 

\begin{figure}[ht!]
\centering
\includegraphics[scale=0.85]{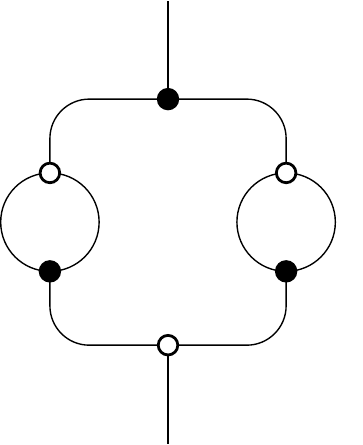}
\caption{The ``double bubble'' motif appears in all of our general constructions.}
\label{fig-doublebubble}
\end{figure}

\section{Handle subtraction for minimum triangulations}

The forthcoming embeddings $K_{12s+3+k}-K_k$ and the embeddings \emph{en route} to constructing a genus embedding of $K_{12s+3+k}$ already constitute minimum triangulations, namely 
$$\left(12s{+}3{+}k, \binom{k}{2}{-}6h\right)\textrm{-triangulations,}$$
where $h$ is a nonnegative integer less than the number of added handles. To construct minimum triangulations on the same number of vertices, but with more missing edges, we turn to the main idea of Jungerman and Ringel~\cite{JungermanRingel-Minimal}: we enforce a specific structure in the current graph that allows us to ``subtract'' handles. The fragment shown in Figure~\ref{fig-subtract} is what we refer as an \emph{arithmetic 3-ladder}. If the step size $h$ in the arithmetic sequence is divisible by $3$ (more generally, divisible by the index of the current graph), then it is possible to find triangular embeddings in smaller-genus surfaces in the following manner:

\begin{figure}[ht]
\centering
\includegraphics[scale=0.9]{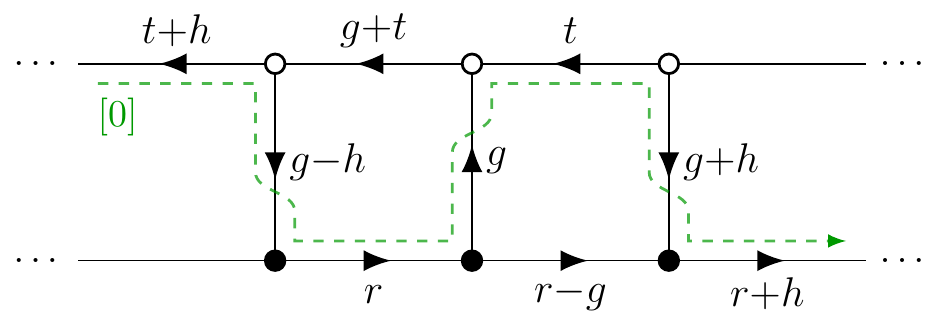}
\caption{An arithmetic 3-ladder and a circuit passing through it.}
\label{fig-subtract}
\end{figure}

\begin{lemma}[Jungerman and Ringel~\cite{JungermanRingel-Minimal}]
Let $(D, \phi, \alpha)$ be an index 3 current graph with current group $\Z_{3m}$ that satisfies all construction principles. Suppose further that it has an arithmetic 3-ladder with step size divisible by 3. If the derived embedding of the current graph has $|V|$ vertices and $|E|$ edges, then for each $k = 0, \dotsc, m$, there exists a triangular embedding of a graph with $|V|$ vertices and $|E|-6k$ edges. 
\end{lemma}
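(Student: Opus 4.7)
The plan is an induction on $k$, with the base $k=0$ being the triangular embedding derived from the given current graph. For the inductive step, I would show that any triangular embedding obtained at level $k{-}1$ can be modified by a local surgery that deletes exactly $6$ edges and reduces the genus by $1$; by Euler's formula the result is then automatically still triangular on the same vertex set, giving the level-$k$ embedding.

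The first concrete step is to trace the circuit(s) through the arithmetic 3-ladder and read off the block of the vertex rotation that the ladder contributes in the derived embedding. Because the step size $h$ is divisible by $3$, which is the index, consecutive rung-currents lie in the same coset of $\langle 3\rangle \le \Z_{3m}$, and so after the cyclic shift by a group element, the neighbors coming from the ladder always appear as a single contiguous block of the same residue class in every affected rotation. This regularity produces a translation-invariant ``strip'' of triangles in the derived embedding, with the strip shifted by $h$ for each successive element of $\Z_{3m}$.

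The surgery itself is to pick one $\Z_{3m}$-orbit of rungs (equivalently, one rung of the ladder in the quotient), delete the six derived edges supported on that orbit, and retriangulate by sliding together the two adjacent strips. The key check, which I expect to be the main technical obstacle, is to verify that the modified rotations at the boundary of the surgery region continue to satisfy the triangularity criterion of Proposition~\ref{prop-ruler}. This reduces to a local computation exploiting that translation by $h$ commutes with the index-$3$ circuit decomposition (because $3 \mid h$), so the rotation patches glue consistently across all three circuit-classes, and orientability is preserved at the same time.

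Finally, iterating: the arithmetic 3-ladder supplies $m$ distinct rung-orbits under the full $\Z_{3m}$-action, so the surgery can be performed at any subset of them without interaction, yielding triangular embeddings with $|E|-6k$ edges for every $k = 0, \dotsc, m$. I would organize the induction so that after $k$ subtractions, the remaining fragment of the 3-ladder is itself an arithmetic 3-ladder (of shorter length, with unchanged step size $h$), making the induction hypothesis directly applicable at the next step rather than having to reverify the whole local structure from scratch.
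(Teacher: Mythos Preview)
Your overall strategy matches the paper's: identify a local configuration of six edges in the derived embedding whose deletion leaves the rotation system triangular (verified via Proposition~\ref{prop-ruler}), then repeat $m$ times. However, two of your specifics are misidentified and would not survive being written out.

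First, the number $m$ does not come from ``$m$ distinct rung-orbits.'' The arithmetic 3-ladder has only three rungs; each lifts to $m$ edges in the derived graph, so there are three orbits of rung-edges, not $m$. What actually produces $m$ subtractible handles is the additive symmetry of the derived embedding: once you have written down the six-vertex partial rotation system at vertices $0,\,g,\,r{+}h,\,h,\,-t,\,g{+}h$ and checked that deleting the middle two neighbours in each row preserves triangularity, you obtain a new such configuration by adding any multiple of $3$ to every label. There are $m$ such shifts in $\Z_{3m}$, and the paper notes that the resulting six-cycles are pairwise face-disjoint, so all $m$ deletions can be performed at once. Your inductive picture of a ``shorter arithmetic 3-ladder'' after each subtraction conflates the current graph with the derived embedding; after a handle subtraction there is no current graph any more, only a modified rotation system, and the remaining subtractible handles are the untouched translates of the original six-cycle.

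Second, your account of why $3\mid h$ matters is too vague. The precise point is that vertex $0$ and vertex $h$ both read their rotation from the log of circuit $[0]$ (since $h\equiv 0\pmod 3$), so the rotation at $h$ is literally the rotation at $0$ shifted by $h$. This is what lets you write down compatible rows at $0$ and $h$ and then, via repeated use of Proposition~\ref{prop-ruler}, fill in the four remaining rows. Without $3\mid h$, vertex $h$ would use a different circuit and the neighbour pattern would not line up. Once you have the six rows, the deletion is purely combinatorial: drop the two interior columns, observe that each shortened row still satisfies Rule $R^*$ with its neighbours, and invoke Proposition~\ref{prop-triangle} to conclude the genus dropped by one. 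No ``retriangulation'' is needed; the surviving faces are already triangles.
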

\begin{proof}[Proof Sketch]
Following Figure~\ref{fig-subtract}, the rotation at vertices 0 and $h$ are of the form
$$\begin{array}{ccccccccccccccccccccccccccccccccccccccccccc}
0. & \dots & -t{-}h & g{-}h & r & g & -t & g{+}h & r{+}h & \dots \\
h. & \dots & -t & g & r{+}h & g{+}h & & & & \dots 
\end{array}$$
Here we used the fact that $h$ is divisible by 3. We may infer, by repeated application of Proposition~\ref{prop-ruler}, the following partial rotation system, for $i = 0, 1, \dotsc, m$:
\begin{equation}
\begin{array}{ccccccccccccccccccccccccccccccccccccccccccc}
0. & \dots & g & -t & g{+}h & r{+}h & \dots \\
g. & \dots & r{+}h & h & -t & 0 & \dots \\ 
r{+}h. & \dots & 0 & g{+}h & h & g & \dots \\ 
\\
h. & \dots & -t & g & r{+}h & g{+}h & \dots \\
-t. & \dots & g{+}h & 0 & g & h & \dots \\
g{+}h. & \dots & h & r{+}h & 0 & -t & \dots 
\end{array}
\label{eq-handle}
\end{equation}
If we delete the middle two columns, the rotation system becomes
\begin{equation*}
\begin{array}{ccccccccccccccccccccccccccccccccccccccccccc}
0. & \dots & g &  r{+}h & \dots \\
g. & \dots & r{+}h & 0 & \dots \\ 
r{+}h. & \dots & 0 & g & \dots \\ 
\\
h. & \dots & -t & g{+}h & \dots \\
-t. & \dots & g{+}h & h & \dots \\
g{+}h. & \dots & h & -t & \dots 
\end{array}
\end{equation*}
This new embedding has six fewer edges, and is still triangular by Proposition~\ref{prop-ruler}, hence it must be a triangular embedding on a surface with one fewer handle by Proposition~\ref{prop-triangle}. 

More generally, we obtain other handles that can be subtracted in the same manner, using the additivity rule. That is, we can find another subtractible handle by adding a multiple of $3$ to every element of (\ref{eq-handle}). The six edges from each of $m$ handles can be deleted simultaneously, as none of the handles share any faces. 
\end{proof}

One way to visualize this operation is to interpret it as the reverse of Construction~\ref{cons-mergehandle}, like in Figure~\ref{fig-polysubtract}. One can check that in all instances in this paper, the number of handles we can subtract in a given embedding is greater than the number needed to realize the minimum triangulation with the fewest number of edges, i.e., the $(n,t)$-triangulation where $t \approx n-6$.

\begin{figure}[ht]
\centering
\includegraphics{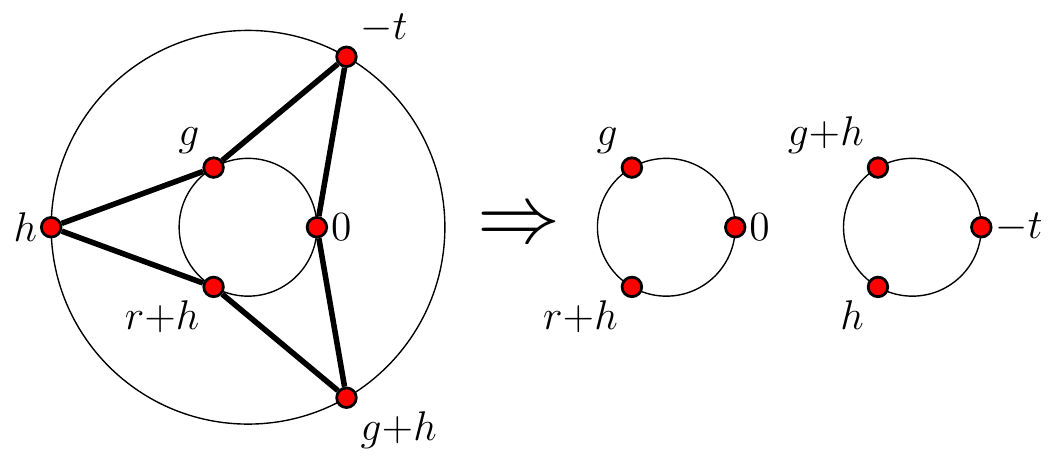}
\caption{The six deleted edges form a cycle that is, roughly speaking, surrounded by two triangles.}
\label{fig-polysubtract}
\end{figure}

\subsection{Comparison with existing literature}

Our utilization of index 3 current graphs is rooted in Jungerman and Ringel's \cite{JungermanRingel-Minimal} solution for Case 5-MT as a straightforward modification of the current graphs used for Case 5-CG. We make no improvement here, but use a variation of their construction as an example of the infinite families of current graphs we seek. 

The standard approach to Case 6-CG is to use index 3 current graphs to first obtain a triangular embedding of $K_{12s+6}-K_3$. The general solution Ringel~\cite[\S9.3]{Ringel-MapColor} chose to present works for all $s \geq 4$, and for $s = 2$, an \emph{ad hoc} current graph that makes use of construction principle (E7) is shown. Jungerman and Ringel~\cite{JungermanRingel-Minimal} solved the remaining minimum triangulations using two families of index 1 current graphs. For $s=1$, the case of $(18,3)$-triangulations is particularly difficult---Jungerman~\cite{Jungerman-K18} found a triangular embedding of $K_{18}-K_3$ using computer search, and we believe that such an embedding cannot be constructed with index 3 or lower current graphs (see the discussion in Section~\ref{subsec-case6} and Appendix~\ref{app-case6}). In \cite{Sun-Index2}, the author unifies the $(18,9)$-triangulation and genus embedding of $K_{18}$ cases using a somewhat \emph{ad hoc} index 2 current graph. 

Index 1 embeddings of $K_{12s+8}-K_5$ were apparently known to Ringel and Youngs (see Ringel~\cite[p.86]{Ringel-MapColor}), though they were unable to use them find genus embeddings of $K_{12s+8}$. Instead, Jungerman and Ringel~\cite{JungermanRingel-Minimal} used them for most of the minimum triangulations on $12s{+}8$ vertices, i.e., the $(12s{+}8,10+6h)$-triangulations for nonnegative $h$. For the remaining $(12s{+}8,4)$-triangulation case, they found two families of index 2 current graphs that could be modified into an embedding of $K_{12s+8}-(K_2\cup P_3)$. 

The best solution for Case 9-CG is a beautiful construction of Jungerman, but it does not construct minimum triangulations except for the exceptional surface $S_2$. For the general case, a family of current graphs found by Guy and Ringel~\cite{GuyRingel} produced\footnote{There are two errors in Figure 1 of~\cite{GuyRingel}: the top left current should be ``$6s{+}1$'' and the vertex between ``$x$'' and ``$z$'' should be a vortex labeled ``$w$''.} minimum triangulations for all $s \geq 5$. Jungerman and Ringel~\cite{JungermanRingel-Minimal} supplied the remaining cases via a variety of approaches, primarily using an inductive construction where some triangular embeddings are glued to one another.

The only previously known solution for the genus of $K_{12s+11}$ for $s \geq 1$ is that of Ringel and Youngs~\cite{RingelYoungs-Case11} for $s \geq 2$ and the \emph{ad hoc} embedding of Mayer~\cite{Mayer-Orientables} for $s = 1$. In the general case, Ringel and Youngs start with an embedding of $K_{12s+11}-K_5$, where the missing edges are added using a highly tailored additional adjacency step. The same current graph yields minimum triangulations of type $(12s{+}11, 10{+}6h)$ for $h \geq 0$, but the troublesome case of $(12s{+}11, 4)$-triangulations, like in Case 8-MT, was resolved via two complicated families of index 2 current graphs. 

Our approach gives a unified construction for both the Map Color Theorem and the minimum triangulations problem for Cases 6, 8, 9, and 11. The infinite families of current graphs covers all $s \geq 2$ for Cases 6, 8, and 9, and $s \geq 3$ for Case 11. In all these solutions, we use families of index 3 current graphs whose varying portions are a part of the Bose ladder. One attractive property of using index 3 current graphs is that we are able to give a solution that does not break into two parts depending on the parity of $s$, as was the case in Jungerman and Ringel's \cite{JungermanRingel-Minimal} current graphs for Case 6-, 8-, and 11-MT. For Cases 9 and 11, we give \emph{ad hoc} constructions for smaller values of $s$. Of particular interest is the case of $n = 23$, for which we give the first current graph construction for a genus embedding of $K_{23}$. 

We present the constructions in increasing difficulty of the additional adjacency solution. In particular, Case 9, which has six vortices, is ultimately simpler than Case 8 because of the additional constraint needed in Lemma~\ref{lem-k5}.

\subsection{Case 5}

As a warmup, let us consider how to find minimum triangulations for Case 5. The original solution in Figure~\ref{fig-current-c5sgen} does not have any arithmetic 3-ladders, but we can modify it by swapping two of the rungs in the Bose ladder, namely the two with vertical arcs labeled $6$ and $12s{-}3$, as in Figure~\ref{fig-case5-minimal}. In this drawing and all forthcoming figures, we only describe the fixed portion of the family of current graphs---at the ellipses, we complete the picture by attaching the corresponding segment of the Bose ladder, as mentioned earlier. Exactly where to truncate the Bose ladder is determined by the currents at the ends of the fixed portion. 

\begin{figure}[ht!]
\centering
\includegraphics[scale=0.85]{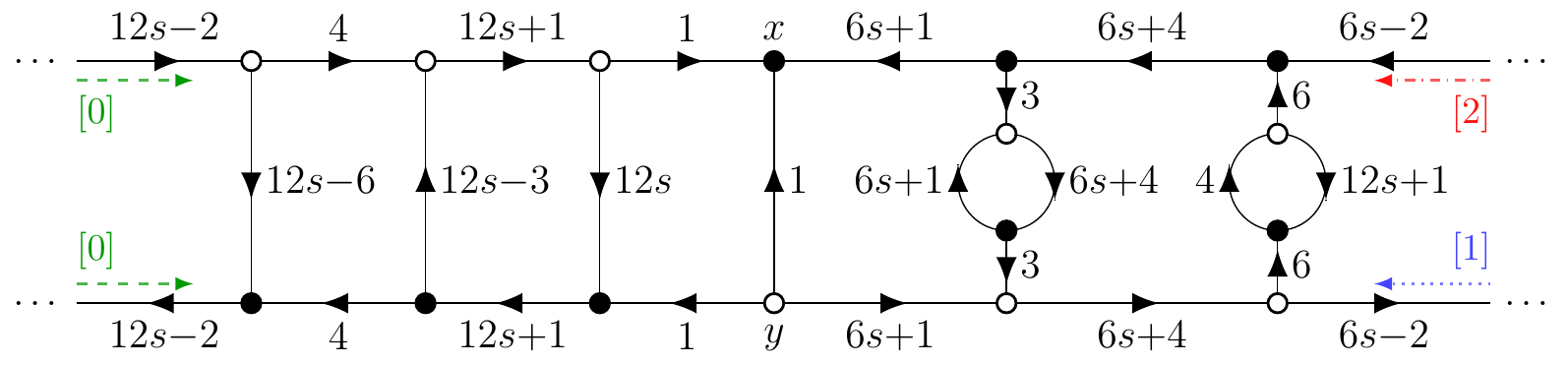}
\caption{A slight modification to the Bose ladder that produces minimum triangulations.}
\label{fig-case5-minimal}
\end{figure}

The idea of pairing the rungs is crucial in Youngs' method~\cite{Youngs-3569} for constructing index 3 current graphs. In their proof of minimum triangulations for Case 5, Jungerman and Ringel~\cite{JungermanRingel-Minimal} took this idea to the extreme and switched all pairs of rungs so that all of the globular rungs appeared on one side of the ladder, but as seen in our example, implementing all these exchanges is not necessary. 

We note that to the left of the vortices in our drawing in Figure~\ref{fig-case5-minimal}, the directions of the arcs are inverted from that of Figure~\ref{fig-current-c5sgen}. Most of our infinite families (except the alternate Case 6-CG construction in Appendix \ref{app-case6}) involve attaching a Bose ladder with a ``M\"obius twist," i.e., the final current graph is a long ladder-like graph whose top-left and bottom-left ends become identified with the bottom-right and top-right ends, respectively. 

\subsection{Case 6}\label{subsec-case6} 

The family of current graphs in Figure~\ref{fig-case6} applies for all $s \geq 2$ and has an arithmetic 3-ladder, giving a simpler and more unified construction for Case 6-CG (after applying Proposition~\ref{cor-k3}), in addition to providing a single family of current graphs, irrespective of parity, for Case 6-MT. The case $s = 1$ is particularly pesky---in the original proof of the Map Color Theorem, the minimum genus embedding of $K_{18}$ was found using purely \emph{ad hoc} methods by Mayer~\cite{Mayer-Orientables}. One might ask if an index 3 current graph exists for $K_{18}-K_3$, but an exhaustive computer search suggests that one does not exist. In Appendix~\ref{app-case6}, we present another solution for Case 6-CG, $s \geq 2$, that almost achieves the $18$-vertex case.

\begin{figure}[ht!]
\centering
\includegraphics[scale=0.85]{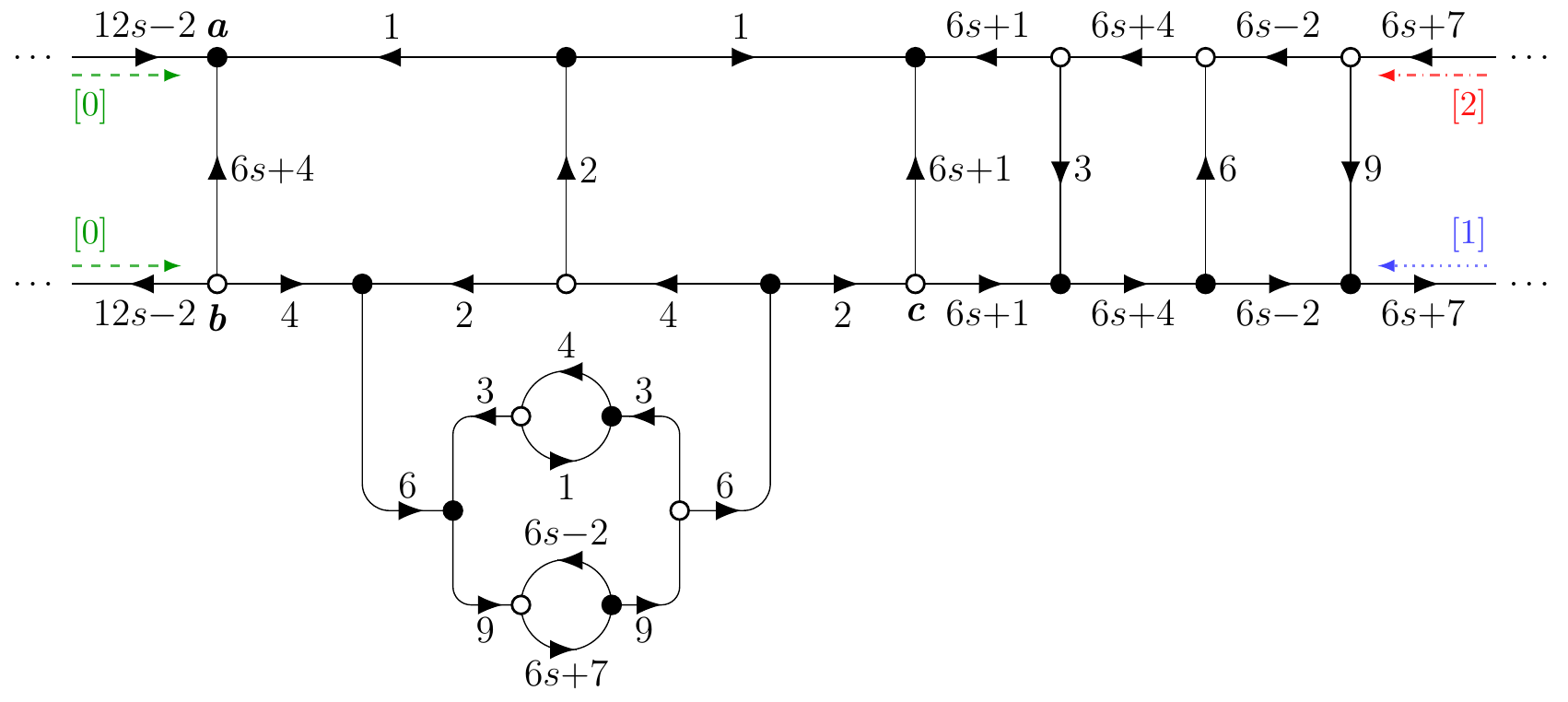}
\caption{A current graph for $K_{12s+6}-K_3$ for $s \geq 2$.}
\label{fig-case6}
\end{figure}

\subsection{Case 9}
We improve on the construction of Guy and Ringel~\cite{GuyRingel} with the family of index 3 current graphs seen in Figure~\ref{fig-case9}. These current graphs produce triangular embeddings of $K_{12s+9}-K_6$ for all $s \geq 2$, and the vertical rungs labeled $3, 6, 9$ form an arithmetic 3-ladder. The circuits $[1]$ and $[2]$ have the six vortices packed as close together as possible. In particular, the log of circuit $[1]$ reads
$$
\begin{array}{ccccccccccccccccccccccccccccccccccccccccccc}
[1]. & \dots & \vora & 4 & \vorb & \dots & \vorc & 1 & \vord & \dots & \vore & 12s{+}1 & \vorf & \dots,
\end{array} 
$$ 
so we may apply Lemma~\ref{lem-k6} with, e.g., $u = 1$, to obtain $(12s{+}9,9)$- and $(12s{+}9,3)$-triangulations and a genus embedding of $K_{12s+9}$. 

\begin{figure}[ht!]
\centering
\includegraphics[scale=0.85]{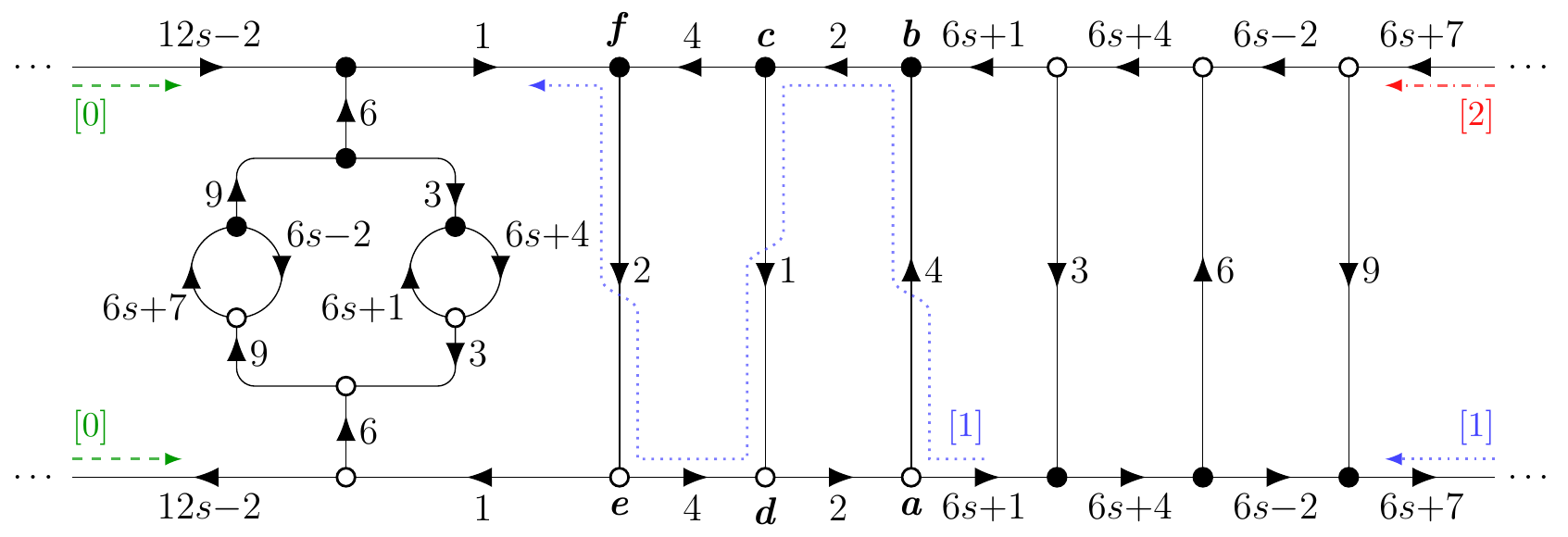}
\caption{A current graph for $K_{12s+9}-K_6$ for $s \geq 2$. Additional fragments of circuits besides the guidelines at the left and right ends indicate components used in the additional adjacency solution.}
\label{fig-case9}
\end{figure}

For the case $s = 1$, Appendix~\ref{app-case911} contains an index 3 current graph with an arithmetic 3-ladder that yields a triangular embedding of $K_{21}-K_3$. The remaining case $s = 0$ is the lone exception to Theorem~\ref{thm-mt}. Huneke \cite{Huneke-Minimum} proved that no triangulation of the surface $S_2$ has 9 vertices, so the embedding of $K_8$ in $S_2$ with its quadrilateral faces subdivided (see Appendix~\ref{app-sporadic}) is a minimum triangulation on 10 vertices. Adding an edge between these two subdivision vertices with Construction~\ref{cons-mergehandle} and immediately contracting that edge results in a genus embedding of $K_9$.

\subsection{Case 8}

The family of current graphs in Figure~\ref{fig-current-c8-sgen} yields triangular embeddings of $K_{12s+8}-K_5$ and has the necessary arithmetic 3-ladder for producing the minimum triangulations on fewer edges. The logs of this current graph are of the form
$$
\begin{array}{ccccccccccccccccccccccccccccccccccccccccccc}
\lbrack0\rbrack. & \dots & 6s{+}1 & 12s & \dots & 12s{-}3 & 6s{-}2 & \dots
\end{array} 
$$
$$
\begin{array}{ccccccccccccccccccccccccccccccccccccccccccc}
\lbrack2\rbrack. & \dots & \vora & 6s{+}2 & \vorb & 12s{+}1 & \vorc & 6s{-}1 & \vord & 12s{-}2 & \vore & \dots 
\end{array}
$$
These translate, by additivity, to the rotations 
$$
\begin{array}{ccccccccccccccccccccccccccccccccccccccccccc}
3. & \dots & 6s{+}4 & 0 & \dots & 12s & 6s{+}1 & \dots
\end{array} 
$$ 
$$
\begin{array}{ccccccccccccccccccccccccccccccccccccccccccc}
2. & \dots & \vora & 6s{+}4 & \vorb & 0 & \vorc & 6s{+}1 & \vord & 12s & \vore & \dots 
\end{array}
$$
By applying Lemma~\ref{lem-k5} with $u = 2$, $v=3$, $(p_1,p_2,p_3,p_4) = (6s{+}5, 0, 6s{+}1, 12s)$, we can construct a $(12s{+}8,4)$-triangulation and a genus embedding of $K_{12s+8}$.

\begin{figure}[ht!]
\centering
\includegraphics[scale=0.85]{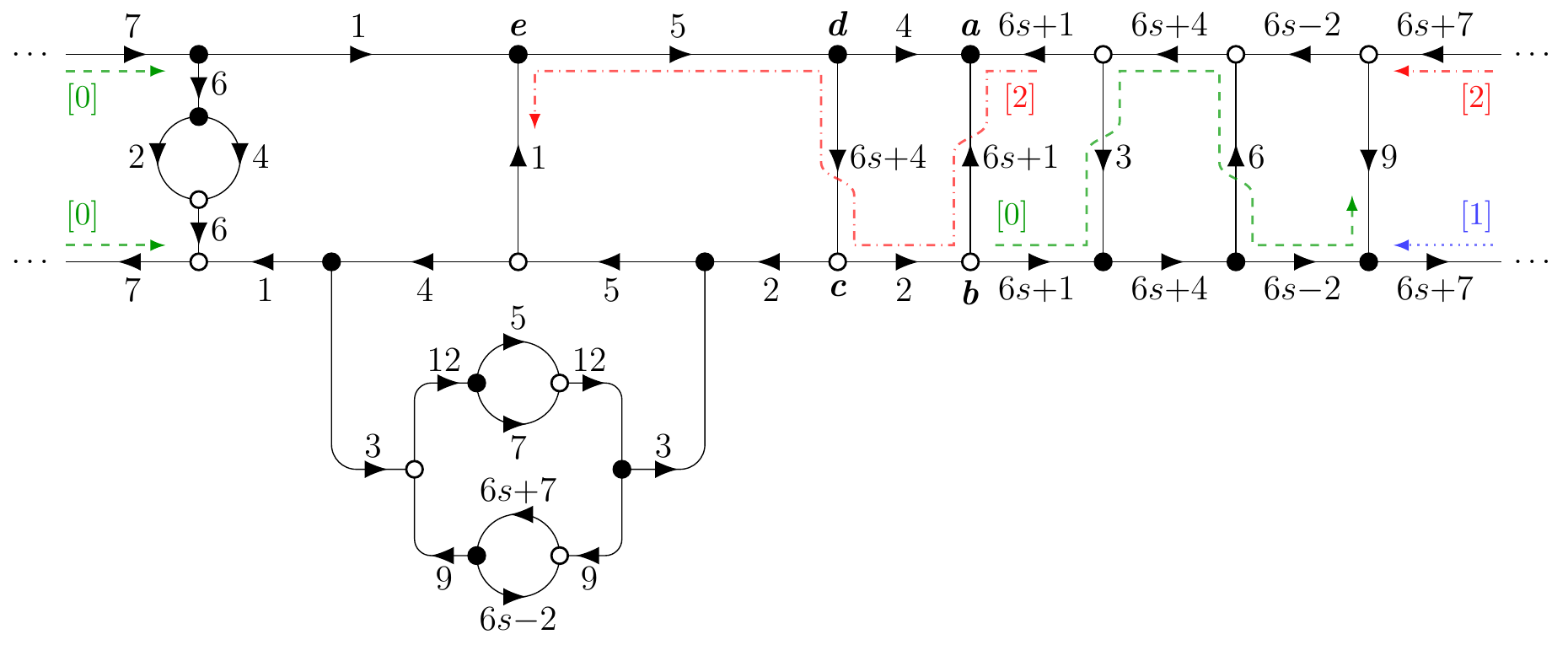}
\caption{A family of index 3 current graphs for $K_{12s+8}-K_5$, $s \geq 2$.}
\label{fig-current-c8-sgen}
\end{figure}

\begin{remark}
Our additional adjacency solution makes use of some of the arcs forming the arithmetic 3-ladder. However, there is no conflict since handle subtractions and additional adjacencies are never applied simultaneously. 
\end{remark}

\subsection{Case 11}
\label{sec-case11}

For $s \geq 3$, we found the family of current graphs in Figure~\ref{fig-current-c11-sgen} that generate triangular embeddings of $K_{12s+11}-K_8$. On the bottom right is an arithmetic 3-ladder with labels $9, 12, 15$. By examining the circuit $[1]$, we obtain the rotations
$$
\begin{array}{ccccccccccccccccccccccccccccccccccccccccccc}
1. & \dots & \vora & 6s{+}8 & \vorb & \dots & \vorc & 5 & \vord & \dots & \vore & 12s{-}1 & \vorf & \dots & \vorg & 6s{+}2 & \vorh & \dots
\end{array} 
$$ 
$$
\begin{array}{ccccccccccccccccccccccccccccccccccccccccccc}
12s{+}1. & \dots & 6s{+}8 & 6s{+}2 & \dots & 5 & 12s{-}1 & \dots 
\end{array}
$$
Applying Lemma~\ref{lem-k8} with $u = 1, v = 12s{+}1, (p_1,p_2,p_3,p_4) = (6s{+}8, 5, 12s{-}1, 6s{+}2)$ yields the remaining triangulations and a genus embedding of $K_{12s+11}$, $s \geq 3$. 
\begin{figure}[ht!]
\centering
\includegraphics[scale=0.9]{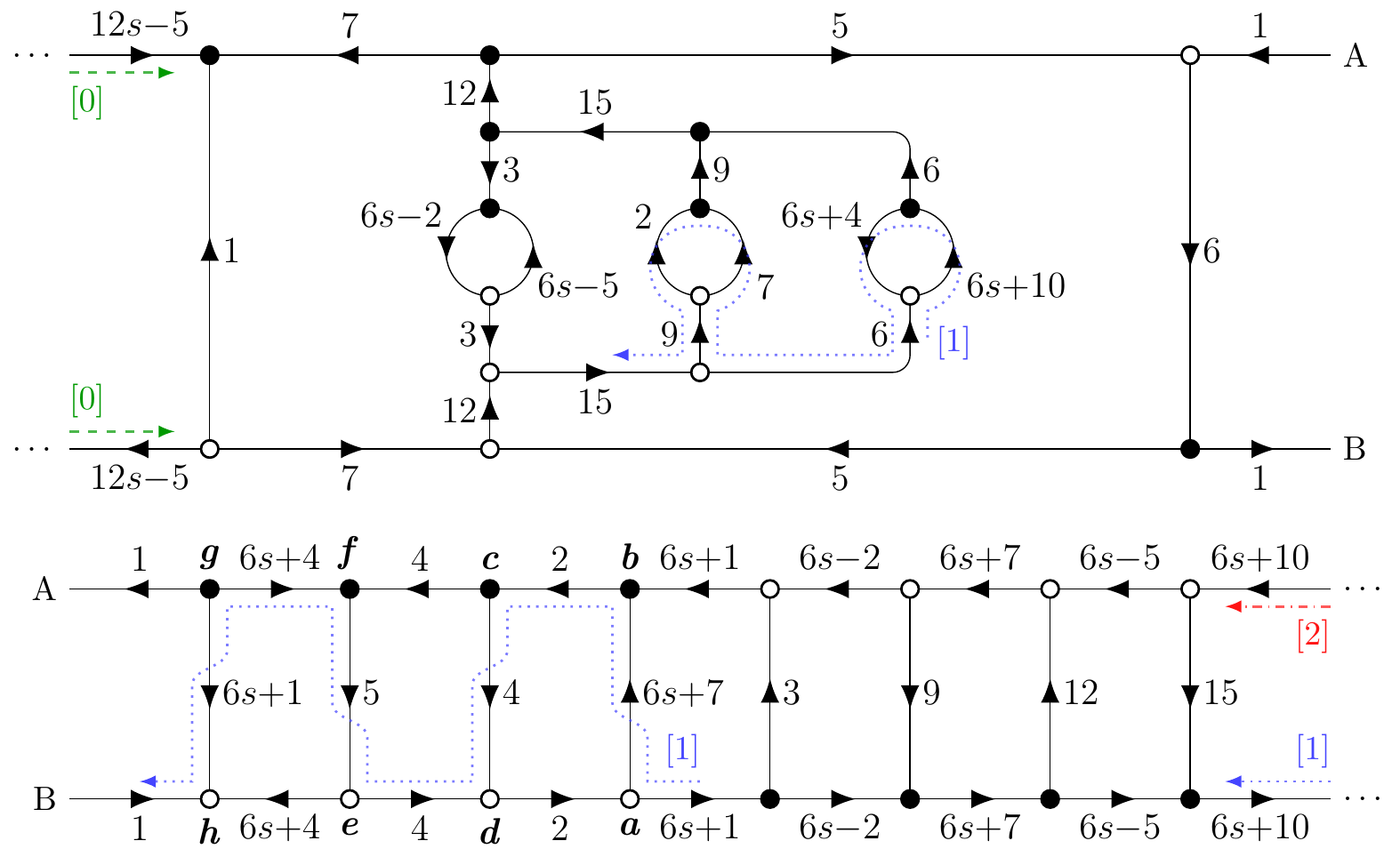}
\caption{Index 3 current graphs for $K_{12s+11}-K_8$, $s \geq 3$.}
\label{fig-current-c11-sgen}
\end{figure}

 The special cases $s = 1,2$ have current graphs found in Appendix~\ref{app-case911}, and an \emph{ad hoc} rotation system for $s = 0$ is given in Appendix~\ref{app-sporadic}.

\section{Conclusion}

We found index 3 constructions that produced simultaneous solutions to the genus of the complete graphs and
to minimum triangulations of surfaces, for Cases 6, 8, 9, and 11:

\begin{itemize}
\item Two constructions were presented for Case 6, $s \geq 2$ of the Map Color Theorem. Prior to the present paper, the only previously known current graph for $s = 2$ was not generalizable to higher values of $s$ due to its use of construction principle (E7). 
\item A significantly simpler solution was found for $K_{12s+9}-K_6$ than that of Guy and Ringel~\cite{GuyRingel} that also works for $s = 2,3$. 
\item Unified constructions for Cases 8 and 11. For the latter, they are the first known triangular embeddings of $K_{12s+11}-K_8$ for $s \geq 3$, and the case $s = 1$ for the Map Color Theorem now has a solution using current graphs. The additional adjacency solution for Case 11 (Lemma~\ref{lem-k8}) is more straightforward than the original construction by Ringel and Youngs~\cite{RingelYoungs-Case11}, especially in light of the interpretation using diamond sum-like operations. 
\end{itemize}

As mentioned earlier, index 3 current graphs allow for changing the number of vortices without violating divisibility conditions necessary for the existence of current graphs. We expect that for fixed $k > 1$ and sufficiently large $s$, there exist appropriate current graphs for triangular embeddings of $K_{12s+3+k}-K_{k}$. The results of this paper extend the applicability of index 3 current graphs to roughly half of both of the Map Color Theorem and the minimum triangulations problem, and we believe that complete solutions for a sufficiently large number of vertices are possible by extending the results presented here. 

We made use of the current group $\Z_{12s+3}$ in our infinite families of current graphs, reserving the group $\Z_{12s+6}$ for the special cases presented in the Appendix \ref{app-case911}. We were unable to find triangular embeddings of $K_{12s+11}-K_8$ for $s=1,2$, so we resorted to using a different approach for these cases. An open problem would be to find an analogue of the Bose ladder for the latter group---one tricky aspect is incorporating the order 2 element $6s{+}3$ into such a pattern. A desirable application of such a method would be a unified construction for all $s \geq 1$ for Case 11. Our current graph for $s = 1$, the first known current graph construction for finding a genus embedding of $K_{23}$, is a step towards that goal.

Some recent unifications were found by the author in the context of index 1 current graphs. Originally, these constructions were meant to improve Case 0-CG \cite{Sun-K12s} and Case 1-CG \cite{Sun-FaceDist}, but these current graphs also have arithmetic 3-ladders and hence also constitute unified constructions that improve upon those found in Jungerman and Ringel \cite{JungermanRingel-Minimal}. At present, Case 2 is the least unified of the residues. Triangular embeddings of $K_{12s+2}-K_2$ were found for all $s \geq 1$ by Jungerman~\cite{Jungerman-KnK2}, which by Construction~\ref{cons-mergehandle} yields genus embeddings of $K_{12s+2}$. The remaining minimum triangulations were found by an entirely different construction by Jungerman and Ringel~\cite{JungermanRingel-Minimal}. It seems plausible that lifting to index 3 current graphs may help, as it did with $K_{20}$ (see~\cite{Sun-FaceDist}) and $K_{23}$. 

\bibliographystyle{alpha}
\bibliography{../biblio}

\appendix

\newpage
\section{An alternate family of current graphs for $K_{12s+6}-K_3$}\label{app-case6}

In Figure~\ref{fig-current-c6-alt}, we give another index 3 construction for triangular embeddings of $K_{12s+6}-K_3$ using as much of the Bose ladder as possible. The corresponding segment of the Bose ladder has $4s-5$ rungs---if we had a family of current graphs where the varying portion was a Bose ladder with one more rung, then for $s=1$ an index 3 current graph would exist (with 0 rungs from the Bose ladder). Thus, we argue that this construction, combined with our experimental results showing nonexistence for $s=1$, is best possible for Case 6-CG. As a side note, the presence of Y-shaped fragments indicates that this family of current graphs does not utilize any more complicated building blocks than those known to Ringel~\emph{et al.}.

\begin{figure}[ht!]
\centering
\includegraphics[scale=0.85]{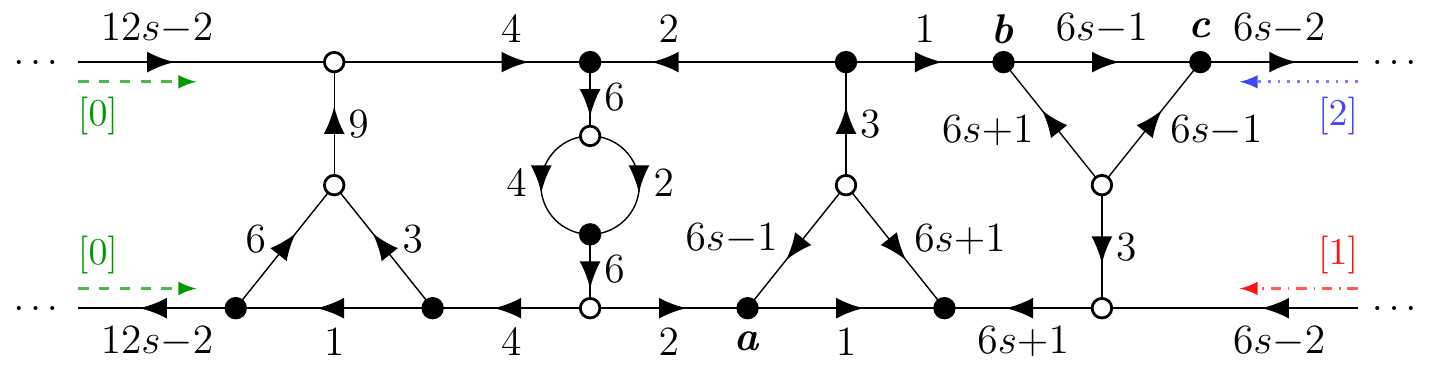}
\caption{Another construction for triangular embeddings of $K_{12s+6}-K_3$.}
\label{fig-current-c6-alt}
\end{figure}

\section{Small current graphs, Cases 9 and 11}\label{app-case911}

\subsection{Case 9}
For $s = 1$ we use the special current graph in Figure~\ref{fig-current-c9-s1}. It is essentially one of the inductive constructions used by Jungerman and Ringel~\cite{JungermanRingel-Minimal}, with the additional observation that the current graph used has an arithmetic 3-ladder.

\begin{figure}[ht!]
\centering
\includegraphics[scale=0.85]{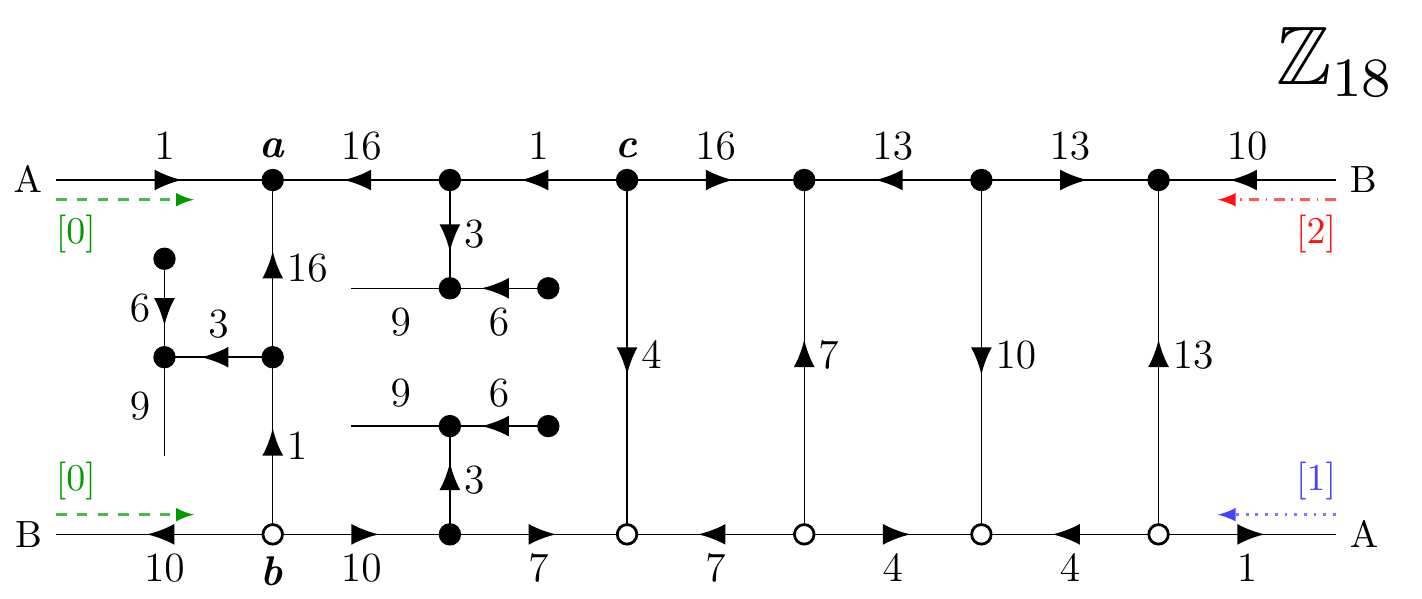}
\caption{A current graph for $K_{21}-K_3$ with an arithmetic 3-ladder.}
\label{fig-current-c9-s1}
\end{figure}

\subsection{Case 11}\label{subsec-app11}

For $s = 1,2$, we first find a current graph with group $\Z_{12s+6}$ that generates a triangular embedding of $K_{12s+11}-K_5$. For $s = 1$, consider the index 3 current graph in Figure~\ref{fig-current-c11-s1}. The rotations at vertices $1$ and $12$ are of the form
$$
\begin{array}{ccccccccccccccccccccccccccccccccccccccccccc}
1. & \dots & \vora & 3 & \vorb & 5 & \vorc & 9 & \vord & 8 & \vore & \dots
\end{array} 
$$ 
$$
\begin{array}{ccccccccccccccccccccccccccccccccccccccccccc}
12. & \dots & 5 & 8 & \dots & 3 & 9 & \dots, 
\end{array}
$$
so applying Lemma~\ref{lem-k5} with $u = 1, v = 12, (p_1,p_2,p_3,p_4) = (5, 8, 3, 9)$ yields $(23,10)$- and $(23,4)$-triangulations, and a genus embedding of $K_{23}$. For $s = 2$, the current graph in Figure~\ref{fig-current-c11-s2} generates a triangular embedding of $K_{35}-K_5$. Similar to the $s=1$ case, we use the rotations
$$
\begin{array}{ccccccccccccccccccccccccccccccccccccccccccc}
2. & \dots & \vora & 10 & \vorb & 6 & \vorc & 7 & \vord & 4 & \vore & \dots
\end{array} 
$$ 
$$
\begin{array}{ccccccccccccccccccccccccccccccccccccccccccc}
19. & \dots & 7 & 10 & \dots & 6 & 3 & \dots, 
\end{array}
$$
and Lemma~\ref{lem-k5} to find the $(35,10)$- and $(35,4)$ triangulations, and a genus embedding of $K_{35}$. The remaining minimum triangulations can be found using the arithmetic $3$-ladder. 

\begin{figure}[ht!]
\centering
\includegraphics[scale=0.85]{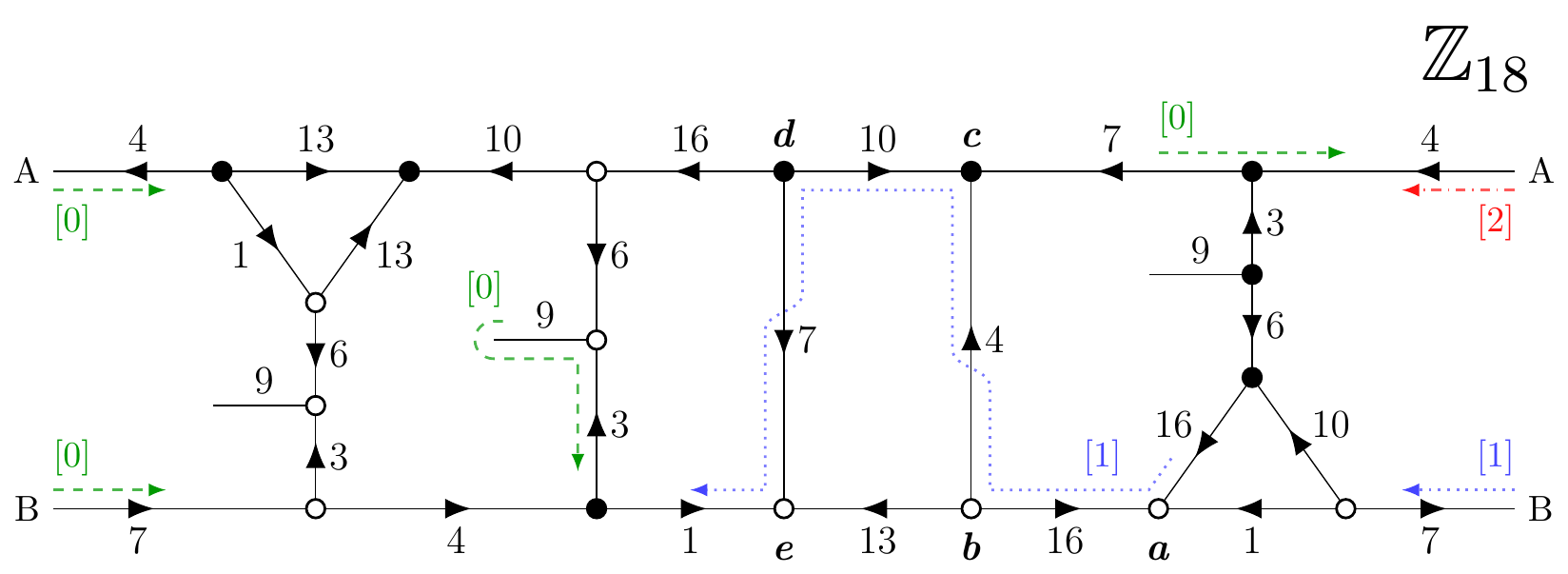}
\caption{An index 3 current graph for $K_{23}-K_5$.}
\label{fig-current-c11-s1}
\end{figure}

\begin{figure}[ht!]
\centering
\includegraphics[scale=0.85]{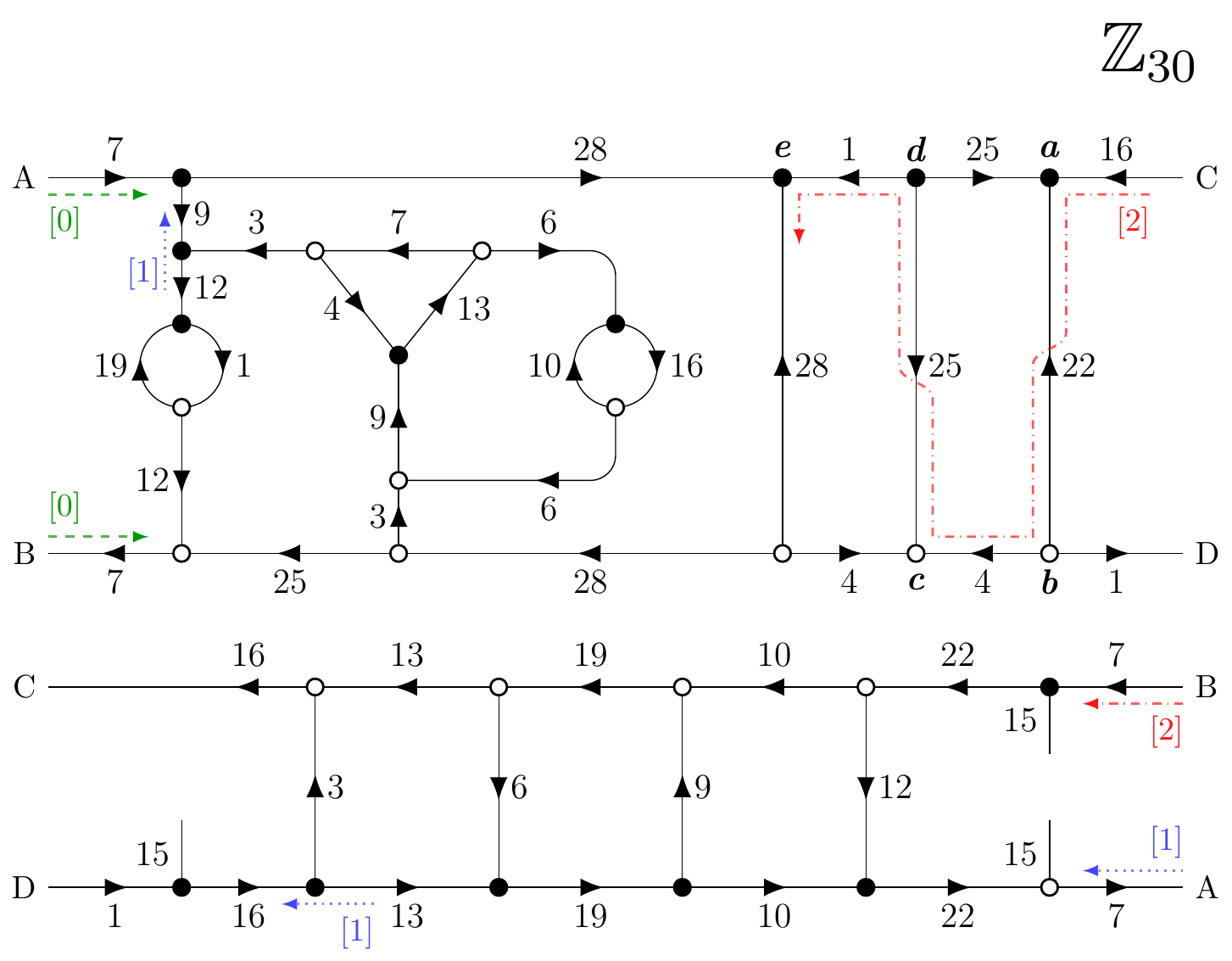}
\caption{An index 3 current graph for $K_{35}-K_5$.}
\label{fig-current-c11-s2}
\end{figure}

An embedding is said to be \emph{nearly triangular} if it has at most one face. The following result relates nearly triangular embeddings to minimum triangulations:

\begin{proposition}
If there exists a minimum genus embedding $\phi: K_n \to S_g$ of the complete graph $K_n$ with exactly one nontriangular, simple face, there exists a minimum triangulation of the surface $S_g$ on $n+1$ vertices.
\end{proposition}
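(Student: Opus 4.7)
The plan is to construct the desired minimum triangulation by subdividing the unique nontriangular face $F$ of $\phi$. Since $F$ is simple, its boundary walk $u_1, u_2, \dots, u_f$ (with $f \geq 4$) consists of distinct vertices, so one may introduce a new vertex $v$ in the interior of $F$ and connect $v$ to each $u_i$ by an arc drawn inside $F$. Simpleness of $F$ is exactly what ensures that no parallel edges are introduced at $v$, so the resulting graph is simple with $n{+}1$ vertices. Every other face of $\phi$ was already a triangle by hypothesis, and the face $F$ is replaced by the $f$ triangles $[v, u_i, u_{i+1}]$ (indices mod $f$), so the new embedding lives in the same surface $S_g$ and is triangular.

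It remains to argue that $n{+}1$ actually attains $MT(S_g)$. By Proposition~\ref{prop-jr}, $MT(S_g) \geq H(S_g)$, so it suffices to show $H(S_g) > n$. The existence of a nontriangular face in $\phi$ means that $K_n$ does \emph{not} triangulate $S_g$, so the equality case of Proposition~\ref{prop-triangle} fails strictly for $K_n$, giving $(n{-}3)(n{-}4) < 12g$. Substituting this into the Heawood formula and using $\sqrt{(2n{-}7)^2} = 2n{-}7$ (for $n \geq 4$) yields $H(S_g) > n$, and since $MT(S_g)$ is an integer, $MT(S_g) \geq n{+}1$, completing the argument.

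The construction is essentially a stellar subdivision of a single face, so there is no serious obstacle. The two points to keep track of are (i) the simpleness hypothesis on $F$, which is exactly what prevents multi-edges at $v$, and (ii) the existence of a nontriangular face, which is what upgrades the Heawood bound from $H(S_g) \geq n$ to the strict $H(S_g) > n$ needed for the lower bound $MT(S_g) \geq n{+}1$.
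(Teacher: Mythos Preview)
Your argument is correct and follows the same approach as the paper: subdivide the unique nontriangular simple face with a new vertex, then invoke the Heawood lower bound to conclude that $n+1$ vertices is optimal. Your derivation of the strict inequality $H(S_g) > n$ from the nontriangularity hypothesis via Proposition~\ref{prop-triangle} is in fact more explicit than the paper's one-line appeal to Propositions~\ref{prop-ry} and~\ref{prop-jr}.
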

\begin{proof}
The bounds derived from Heawood numbers (Propositions~\ref{prop-ry} and \ref{prop-jr}) give a lower bound of $n+1$ on the number of vertices in a minimum triangulation of $S_g$. Subdividing the nontriangular face with a new vertex yields the desired triangulation. 
\end{proof}

In particular, the aforementioned nonexistence result for $(9,3)$-triangulations due to Huneke~\cite{Huneke-Minimum} was used to show that $K_8$ does not have a nearly triangular embedding in $S_2$~\cite{Sun-FaceDist}. We use the nearly triangular genus embedding of $K_{22}$ given in \cite{Sun-FaceDist} to construct the remaining $(23,16)$-triangulation.

Finally, an \emph{ad hoc} unification of the $11$-vertex case is given in Appendix~\ref{app-sporadic}.

\section{Some small embeddings}\label{app-sporadic}

We collect a few special embeddings in this section. The first such embedding is of $K_8$, after deleting $q_0$ and $q_1$:

$$\begin{array}{rrrrrrrrrrrr}
0. & 2 & 7 & 3 & 1 & 4 & 5 & 6 & q_0 \\
2. & 4 & 1 & 5 & 3 & 6 & 7 & 0 & q_0 \\
4. & 6 & 3 & 7 & 5 & 0 & 1 & 2 & q_0 \\  
6. & 0 & 5 & 1 & 7 & 2 & 3 & 4 & q_0 \\
1. & 7 & 6 & 5 & 2 & 4 & 0 & 3 & q_1 \\
3. & 1 & 0 & 7 & 4 & 6 & 2 & 5 & q_1 \\
5. & 3 & 2 & 1 & 6 & 0 & 4 & 7 & q_1 \\
7. & 5 & 4 & 3 & 0 & 2 & 6 & 1 & q_1 \\
q_0. & 6 & 4 & 2 & 0 \\
q_1. & 1 & 3 & 5 & 7
\end{array}$$

This embedding was used in several ways: it is a minimum triangulation of $S_2$, it is a genus embedding of $K_9$ after amalgamating $q_0$ and $q_1$, and three of the handles of Lemma~\ref{lem-k8} can be thought of as gluing this embedding at two quadrilateral faces.

Known $(11,4)$-triangulations and genus embeddings of $K_{11}$ do not follow naturally from current graph constructions. To lessen the load of having to verify these special embeddings, we give a triangular embedding of $K_{11}-C_4$:
$$\begin{array}{rrrrrrrrrrrr}
0. & 1 & 10 & 8 & 4 & 2 & 9 & 7 & 5 & 3 & 6 \\
1. & 0 & 6 & 4 & 8 & 5 & 9 & 3 & 7 & 2 & 10 \\
2. & 0 & 4 & 10 & 1 & 7 & 6 & 5 & 8 & 3 & 9 \\
3. & 0 & 5 & 10 & 4 & 7 & 1 & 9 & 2 & 8 & 6 \\
4. & 0 & 8 & 1 & 6 & 9 & 5 & 7 & 3 & 10 & 2 \\
5. & 0 & 7 & 4 & 9 & 1 & 8 & 2 & 6 & 10 & 3 \\
6. & 0 & 3 & 8 & 10 & 5 & 2 & 7 & 9 & 4 & 1 \\
7. & 0 & 9 & 6 & 2 & 1 & 3 & 4 & 5 \\
8. & 0 & 10 & 6 & 3 & 2 & 5 & 1 & 4 \\
9. & 0 & 2 & 3 & 1 & 5 & 4 & 6 & 7 \\
10. & 0 & 1 & 2 & 4 & 3 & 5 & 6 & 8 \\
\end{array}$$
The missing edges are $(7,8)$, $(8,9)$, $(9,10)$, and $(10,7)$, which can be added with one handle using Construction~\ref{cons-mergehandle} as in Figure~\ref{fig-k11}. Note that this construction does not really make use of any specific structure in the embedding, as we can always find a face incident with a given edge. We thus formulate this additional adjacency approach more generally:

\begin{proposition}
If there exists a triangular embedding of $K_n-C_4$, then there exists a genus embedding of $K_n$. 
\end{proposition}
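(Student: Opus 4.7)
The plan is to directly generalize the construction just used for $K_{11}-C_4$, treating the missing $C_4$ as a pair of disjoint diagonals together with a single handle-merge.

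First, label the four $C_4$ vertices $v_1, v_2, v_3, v_4$ so that the missing edges are exactly $(v_i, v_{i+1})$ (indices mod~4). The two diagonals $(v_1, v_3)$ and $(v_2, v_4)$ are present in $K_n - C_4$, so since the embedding is triangular each of them lies on a triangular face. Pick such faces $F_1 = [v_1, v_3, x]$ incident to $(v_1, v_3)$ and $F_2 = [v_2, v_4, y]$ incident to $(v_2, v_4)$. Because the four $C_4$ edges are precisely those missing, $x \notin \{v_2, v_4\}$ and $y \notin \{v_1, v_3\}$, so $F_1$ and $F_2$ are distinct triangular faces.

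Next, apply Construction~\ref{cons-mergehandle} to $F_1$ and $F_2$, choosing the cyclic starting points to be $v_1$ and $v_2$, respectively. This installs the missing edge $(v_1, v_2)$ using one handle and replaces the two triangles by a single 8-sided face whose cyclic boundary is $[v_1, v_3, x, v_1, v_2, v_4, y, v_2]$. Inside this octagon, add the remaining three missing edges as chords: connect the trailing $v_2$ back to the leading $v_3$, then $v_3$ to $v_4$, and then $v_4$ to the second occurrence of $v_1$. A short check on the cyclic positions $(1,\dotsc,8)$ of the octagon confirms that these three chords are pairwise non-crossing (consecutive pairs share an endpoint, and the outer pair has nested endpoint-pairs), so they can be drawn inside the face without introducing further handles or disturbing the rotations elsewhere.

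At this point all four edges of $C_4$ have been inserted using exactly one handle, starting from a triangular embedding of $K_n - H_e$ with $e = 4 < 6$, so Proposition~\ref{prop-tight} immediately certifies that the resulting embedding of $K_n$ is of minimum genus. The only step requiring any care is the non-crossing verification for the three chords, but this reduces to an elementary combinatorial check on the fixed 8-gon boundary above and is independent of $n$, $x$, and $y$.
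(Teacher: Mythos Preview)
Your proof is correct and follows exactly the paper's approach: pick triangular faces on the two present diagonals of the missing $C_4$, merge them with one handle via Construction~\ref{cons-mergehandle}, and route the four missing edges as non-crossing chords in the resulting octagon, invoking Proposition~\ref{prop-tight} for minimality. The paper leaves this argument implicit in the preceding paragraph and Figure~\ref{fig-k11}; you have simply written it out in full, including the check that $F_1 \neq F_2$.
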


\begin{figure}[ht]
\centering
\includegraphics{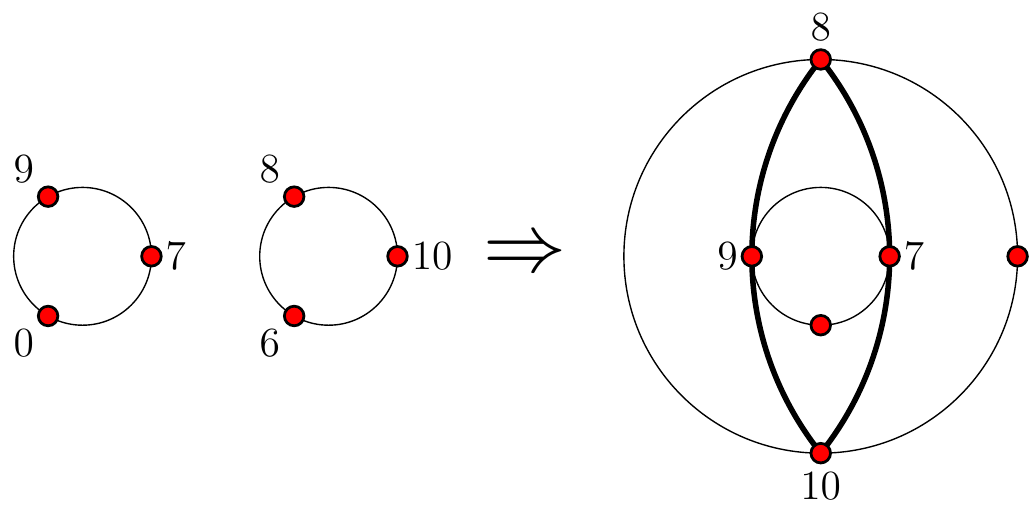}
\caption{A generic method for adding a $C_4$ with one handle, applied to the triangular embedding of $K_{11}-C_4$.}
\label{fig-k11}
\end{figure}

\end{document}